\documentclass[12pt]{amsart}

\usepackage[english]{babel}
%PACKAGES
\usepackage{tikz-cd}
\usepackage{amsrefs}
\usepackage{graphicx}
\graphicspath{ {images/} }
\usepackage[T1]{fontenc}
\usepackage{txfonts}
\usepackage{times}
\usepackage{amssymb,verbatim,amscd,amsfonts,amsmath,amsthm,enumerate}
\usepackage[all]{xy}
\usepackage{subfig}
\usepackage{tikz}
\usetikzlibrary{shapes,arrows,shadows}
\usetikzlibrary{decorations.markings}
\usepackage{color}
\usepackage[normalem]{ulem}
\usepackage{hyperref}
\usepackage{wrapfig}
\usetikzlibrary{arrows}
\usepackage{pb-diagram}

%%%%%%%%%%%%%%%%%%%%%%%%%%%%%%%%%%%%%%%%%%%%%%%%%%%%%%
%%%%%%%%%%%%%%%%%%%%%%%%%%%%%%%%%%%%%%%%%%%%%%%%%%%%%%

%%%%%%%%%%%%%%%%%%%%%%%%%%%%%%%%%%%%%%%%%%%%%%%%%%%%
%%%%%%%%%%%%%%%%%%%%%%%%%%%%%%%%%%%%%%%%%%%%%%%%%%%%%
%%%%%%%%%%%%%%%%%%%%%%%%%%%%%
\setlength{\textheight}{680pt} \setlength{\topmargin}{-50pt}
 \setlength{\textwidth}{500pt}
\setlength{\evensidemargin}{-10pt}
\setlength{\oddsidemargin}{-10pt}
%%%%%%%%%%%%%%%%%%%%%%%%%%%%

\newtheorem{theorem}{Theorem}[section]

\newtheorem{corollary}[theorem]{Corollary}

\newtheorem{remark}[theorem]{Remark}

\newtheorem{definition}[theorem]{Definition}
\newtheorem{question}[theorem]{Question}

%%%%%%%%%%%%%%%%%%%%%%%%%%%%%%%
%%%%%%%%%%%%%%%
\begin{document}

\title{On Veech groups of infinite superelliptic curves}
\author[Camilo Ram\'{\i}rez Maluendas]{Camilo Ram\'{\i}rez Maluendas}
\address{Departamento de Matem\'atica y Estad\'istica\\ Facultad de Ciencias Exactas y Naturales\\ Universidad Nacional de Colombia, Sede Manizales \\
Manizales 170004, Colombia.}
\email{camramirezma@unal.edu.co}
%\thanks{Partially supported by Project HERMES 47019}

\keywords{Infinite superelliptic curve, Loch Ness monster, flute surface, Veech group, tame translation surface, saddle connection, holonomy vectors} 
\subjclass[2000]{20F65; 37F10; 20H10; 57N16}%53A99,%57N05
%\date{October 2019}

\begin{abstract}
We study infinite superelliptic curves as translation surfaces and explore their Veech groups. These objects are branched covering of the complex plane with branching over infinitely many points. We provide a criterion for isomorphism between a special family of infinite superelliptic curves. We show geometric descriptions of saddle connections and holonomy vectors on these infinite superelliptic curves. We prove that the Veech group of an infinite superelliptic curve are all the matrices arising from the differential of the affine mappings $\mathbb{C}$ to itself, permuting the branched points. We obtain necessary and sufficient conditions to guarantee that the Veech group of an infinite superelliptic curve is uncountable. We establish a trichotomy on the holonomy vector set and from it, we give a precise characterization of some countable groups that can appear as Veech group of an infinite superelliptic curve.  We also construct and study several examples of interesting infinite superelliptic curves illustrating our results.
\end{abstract}

\maketitle

\section*{Introduction}\label{sec:introduction}

The work of several authors in the field of \emph{hyperlliptic curves} has contributed to the advance of the theory of compact Riemann surfaces as complex algebraic curves \cites{Harvey, Rose}. Recently, in \cite{AGHQR}, this concept was extended within the context of the Weierstrass's map, given rise to a mathematical object known as \emph{infinite superelliptic curves}. More precisely, for any sequence $(z_{k})_{k\in\mathbb{N}}$ of different complex numbers  such that $\lim_{k\to\infty}\vert z_{k}\vert =\infty$, a Weierstrass's theorem ensures the existence of an entire map $f:\mathbb{C}\to \widehat{\mathbb{C}}$ (called \emph{Weierstrass map}) whose simple zeros are the points $z_{1},z_{2},\ldots$. Then, the affine plane curve
\begin{equation}\label{eq:infinite_superelliptic_curve_introduction}
S(f)=\left\{(z,w)\in\mathbb{C}^2: w^n=f(z)\right\},
\end{equation}
with $n\geq 2$, is called \emph{infinite superelliptic curve}. If $n=2$, then the affine plane curve is called \emph{infinite hyperelliptic curve}. This infinite superelliptic curve is a connected Riemann surface topologically equivalent to the \emph{Loch Ness monster}.  Moreover, the projection map onto the first coordinate $p_{\textbf{z}}:S(f)\to \mathbb{C}$, which is given by 
\begin{equation}\label{eq:projection_map_01}
 (z, w)\mapsto z,
\end{equation}
it is a branched covering map, whose branch points are the terms in the sequence $(z_{k})_{k\in\mathbb{N}}$. In addition, for each point $z\in \mathbb{C}\setminus \{ z_{k}: k\in \mathbb{N}\}$, the fiber $p_{\textbf{z}}^{-1}(z)$ consists of $n$ points. From these properties we obtain a \emph{tame translation structure} on $S(f)$ as we stated in Theorem \ref{theorem:tame_translate_strucuture_on_S(f)}.Furthermore, translation surfaces have been studied in different contexts: Dynamical system \cites{NPS, HopWe}, Riemann surface \cites{AbMu, MirWr} and Teichm\"uller theory \cites{Moller1, Shin} among others. These diverse research motivate the study of infinite superelliptic curve as translation surfaces. In the following, we will detail the results we have achieved.

%This result motivates us to explore the properties of infinite hyperelliptic curves as translation surfaces and their respective Veech groups.

If we consider the sequences $(z_{n})_{n\in\mathbb{N}}$ and $(z_{k}')_{k\in\mathbb{N}}$ such that $\lim_{k\to\infty}\vert z_{k}\vert =\infty$, and $\lim_{k\to\infty}\vert z_{k}'\vert =\infty$, then we take the   Weierstrass maps $f$ and $g$ having as simple zeros the points $z_{1},z_{2},\ldots$ and $z_{1}',z_{2}',\ldots$, respectively. Thus, we obtain the infinite hyperelliptic curves $S(f)$ and $S(g)$ as in equation \eqref{eq:infinite_superelliptic_curve_introduction}. In Theorem \ref{theorem:isomorphism}, we describe a criterion for isomorphism between infinite hyperlliptic curves, consisting by fiber-preserving to descend. More precisely, there exists a fiber-preserving to descend isomorphism $\tilde{T}: S(f)\to S(g)$, if and only if, there exists an isomorphism  $T:\mathbb{C}\to \mathbb{C}$ sending the zeros of $f$ onto the zeros of $g$. 

The Banach space of all the bounded sequence of complex numbers is denoted by $\ell_{\infty}$, then the collection of infinite hyperelliptic curve can be thought as $\mathcal{C}_{0}$ the subset of $\ell_{\infty}$ of all sequence of different complex numbers such that each one of them converge to zero. Notably,  the group of complex translation mappings acts on $\mathcal{C}_{0}$. Consequently, as an outcome of the preceding result, the space of all infinite hyperelliptic curves having tame translation surfaces, up to isomorphism, it can be identified to the quotient space $\mathcal{C}_{0}/ {\rm Trans}(\mathbb{C})$ (refer to Theorem \ref{t:modulli_space}).

It is well-known that the set of holonomy vectors, denoted as ${\rm Hol}(S)$, of a compact translation surface $S$, it does not have limit points \cite{Voro}*{Proposition 3.1}. Nevertheless, in the case of an infinite superelliptic curve $S(f)$, we establish the following \emph{trichotomy} on the set ${\rm Hol}(S(f))$:
\begin{itemize}
    \item[\textbf{(1)}] The set of holonomy vectors ${\rm Hol}(S(f))$ of  $S(f)$ does not have limit points, it means, the derived set ${\rm Hol}(S(f))'=\emptyset$. Therefore, ${\rm Hol}(S(f))$ is discrete.
    \item[\textbf{(2)}] The derived set ${\rm Hol}(S(f))'\neq \emptyset$ and ${\rm Hol}(S(f))$ is a closed subset of $\mathbb{C}$.
    \item[\textbf{(3)}] The derived set ${\rm Hol}(S(f))'\neq \emptyset$ and ${\rm Hol}(S(f))$ is not a closed subset of $\mathbb{C}$.
\end{itemize}
For each of the aforementioned cases, we present an example of an infinite superelliptic curve (see \S\ref{example:holonomy_Vectors_origami},  \S\ref{example:isc_holonomy_omega}, \S\ref{example:holonomy_set_rationals}). Nonetheless, the precise characterization of the set of holonomy vector associated to the infinite superelliptic curves in these examples is achieved through  Theorem \ref{theorem:holonomy_vectors_of_S(f)}. This theorem establishes that the holonomy vectors are certain differences between the zeros of the Weierstrass function $f$. In other words, the element $v\in\mathbb{C}$ is a holonomy vector of the infinite superelliptic curve $S(f)$, if and only if, there are zeros $z_{r}$ and $z_{l}$ of the Weierstrass function $f$ such that $v=z_{r}-z_{l}$ and, the straight line segment joining the points $z_{r}$ and $z_{l}$ contains no zeros of $f$ within it. We finish our study of an infinite superelliptic curve $S(f)$ as translation surface, by proving a geometric description of all its saddles connection lifting suitable straight line segments of $\mathbb{C}$ via the projection map $p_{\textbf{z}}:S(f)\to\mathbb{C}$. In Theorem \ref{theorem:saddle_connection_S(f)} we establish that a geodesic interval $\gamma$ of $S(f)$  is a saddle connected, if and only if, the image $p_{\textbf{z}}(\gamma):=L$ is a straight line segment joining two zeros of the Weierstrass $f$ such that $L$ removing its ends point does not pass throughout any zero of $f$. 

In 1989 W. A. Veech \cite{Vee} associated to each translation surface $S$ a group of matrices $\Gamma(S)<{\rm GL}_{+}(2,\mathbb{R})$, which is well known as the \emph{Veech group of $S$}. This group has attracted the attention of several mathematicians \cites{EskMirz, Celta, McMuller, Bain}. In particular, it has been studied the Veech group of a class of special translation surfaces that are covering of the square torus ramified over the origin, so-called \emph{square tiled surfaces} or \emph{origamis}  \cites{GutJud, HopWe2012, GabiT}. In this paper, we explore the Veech group $\Gamma(S(f))$ of an infinite superelliptic curve $S(f)$ through branched covering map (projection map) $p_{\textbf{z}}$. We take the (translation) flute surface $\mathbb{C}^{\star}$ obtained from the complex plane $\mathbb{C}$ by removing the zeros of the Weierstrass map $f$. Then our Theorems \ref{theorem:related_Vecch_group_1} and \ref{theorem:related_Vecch_group_2} ensure that the Veech groups associated to $S(f)$ and $\mathbb{C}^{\star}$, respectively, are the same. In other words,  $\Gamma(S(f))=\Gamma(\mathbb{C}^{\star})$. 

In \cite{PSV}*{Theorem}, the authors proved that if the Veech group $\Gamma(S)$ of a tame translation surfaces $S$ is a proper subgroup of ${\rm GL}_{+}(2,\mathbb{R})$, then one of the following holds:
\begin{itemize}
    \item[\textbf{(i)}] The group $\Gamma(S)$ is uncountable and conjugate to a special group of upper triangular matrices, or 
    \item[\textbf{(ii)}] The group $\Gamma(S)$ is countable and without contracting elements.
\end{itemize}
Naturally, this leads us inquire: which groups can appear as Veech groups within the context of infinite superelliptic curve? For the case \textbf{(i)}, Theorem \ref{t:characterization_uncountable} establishes sufficient and
necessary conditions to guarantee that the Veech of an infinite superelliptic curve $S (f)$ is an uncountable subgroup of ${\rm GL}_{+}(2, \mathbb{R})$. To be more specific, the Veech group $\Gamma(S(f))$ of $S(f)$ is an uncountable subgroup of ${\rm GL}_{+}(2,\mathbb{R})$, if and only if, all the zeros of the Weierstrass map $f$ are collinear. Furthermore, in Subsection \ref{sub:realization_uncountable_Veech_group}, we realize certain uncountable subgroups of ${\rm GL}_{+}(2,\mathbb{R})$ as Veech group of infinite superelliptic curves.

The case \textbf{(ii)} is studied from trichotomy established on the set of holonomy vector.  If the Veech group $\Gamma(S(f))$ of the infinite superelliptic curve $S(f)$ is an non-trivial countable subgroup  of ${\rm GL}_{+}(2,\mathbb{R})$ without contracting matrices. Then Theorem \ref{theorem:countable_Veech_group_of_infinite_superelliptic_curve} establishes that:

\begin{itemize}
\item[\textbf{(1)}] If the set of holonomy vectors ${\rm Hol}(S(f))$ does not have limits point, then the Veech group $\Gamma(S(f))$ is discrete.

\item[\textbf{(2)}] If the set of holonomy vectors ${\rm Hol}(S(f))$ is closed such that the derived set ${\rm Hol}(S(f))'\neq \emptyset$, then the Veech group $\Gamma(S(f))$ is homeomorphic to some finite subgroup of $S^{1}$ the unit circle in the complex plane, considered with complex multiplication.
\end{itemize}
In Subsection \ref{example:finite_group_as_Veech_group}, we realize any finite group of ${\rm GL}_{+}(2,\mathbb{R})$ as Veech group of an infinite superelliptic curve.

The paper is structured as follows. In Section \ref{sec:tame}, we provide a introduction to several concepts: tame translation surface, infinite superelliptic curve, and present the equivalent relation isomorphic on infinite hyperelliptic curves. Moreover, we recall the objects: saddles connection and holonomy vectors of infinite. We also state and proof our main results related to the properties of infinite superelliptic curves as translation surfaces. In Section \ref{section:Veech_group} we collect the concept of Veech group associated to translation surfaces. Here, we focus on the Veech group of infinite superelliptic curves.

\subsection*{Acknowledgements}

We express our gratitude to Universidad Nacional de Colombia, Sede Manizales. We have dedicated this work to our beautiful family: Marbella and Emilio, in appreciation of their love and support. We thank Israel Morales Jim\'enez and John Alexander Arredondo for providing helpful comments and conversations
on this work.

%%%%%%%%%%%%%%%%%%%%%%%%%%%%%%%%%%%%%%%%%%%%%%%%%Section Tame translation surfaces and infinite superelliptic curves
%%%%%%%%%%%%%%%%%%%%%%%%%%%%%%%%%%%%%%%%

\section{Infinite superelliptic curves as tame translation surfaces}\label{sec:tame}

We shall identify throughout  the whole manuscript, the complex plane $\mathbb{C}$ with the real $2$-dimensional space $\mathbb{R}^2$ by sending $\{1,i\}$  to the standard  basis of $\mathbb{R}^2$. The term \emph{surface} refers to a connected 2-dimensional topological real manifold having empty boundary, and it is denoted by $S$.

%%%%%%%%%%%%%%%%%%%%%%%%%%%
%%%%%%%%%Subsection Tame translation surfaces
%%%%%%%%%%%%%%%%%%%%
\subsection{Tame translation surfaces}\label{subsection_tame_translation_surface}
A surface $S$ is said to be \emph{translation}, if $S$, except for a subset of points ${\rm Sing}(S)\subset S$, can be endowed with a translation atlas, \emph{i.e.}, an atlas whose transition maps are locally restrictions of a translation of $\mathbb{C}$. We assume that every point in ${\rm Sing}(S)$ is non removable, it means, the translation atlas can not be extended to any of the points in ${\rm Sing}(S)$. An element $x$ of the set ${\rm Sing}(S)$ is called \emph{singular point of $S$} or \emph{singularity}. A \emph{translation structure} on $S$ is a maximal translation atlas on $S$. From the Uniformization Theorem \cite{Abi}*{p. 580}, it follows that the only surfaces that admit a translation structure without removing points are: the plane, the torus and the cylinder \cite{FarKra}*{p. 193}. 

If we consider a translation surface $S$, then we can pull back the Euclidean (Riemannian) metric of $\mathbb{C}$ via its translation structure; thus we obtain a flat Riemannian metric $\mu$ on $S$. Let $\widehat{S}$ denote the \emph{metric completion of $S$} with respect to the flat Riemannian metric $\mu$.

\begin{definition}[\cite{PSV}*{Definition 2.2}]\label{definition:tame}
A translation surface $S$ is said to be \textbf{tame}, if for each point $x\in \widehat{S}$ there exists a neighborhood $U_x \subset \widehat{S}$ isometric to either:
\begin{itemize}
	\item[\textbf{(1)}] Some open of the complex plane.
	
	\item[\textbf{(2)}] An open of the ramification point of a (finite or infinite) cyclic branched covering of the unit disk in the complex plane.
\end{itemize}
\end{definition}

In the later case, if the neighborhood $U_{x}$ is isometric to the finite cyclic branched covering, which has finite order $m\in\mathbb{N}$, then the point $x$ is called \emph{finite cone angle singularity of angle $2m\pi$}. But, if $U_{x}$ is isometric to the infinite cyclic branched covering, then the point $x$ is called \emph{infinite cone angle singularity}.

We denote by ${\rm Sing}(\widehat{S})$ the set of all the finite and infinite cone angle singularities of $\widehat{S}$. Each element of ${\rm Sing}(\widehat{S})$ is called \emph{a cone angle singularity of} $\widehat{S}$ or just \emph{cone point}. We note that the space  obtained from $\widehat{S}$ by removing its infinite cone angle singularities, is topologically equivalent to the surface $S$. 

%%%%%%%%%%%%%%%%%%%%%%%%%%%%%%%%%%%%
%%%%%%%%Subsection Infinite superelliptic curves
%%%%%%%%%%%%%%%%%%%%%%%%%%%%%%%
\subsection{Infinite superelliptic curves}\label{subsection:infinite_hyperelliptic_curve}
A sequence  $(z_{k})_{k\in\mathbb{N}}$ of different complex numbers is said to be \emph{convergent to infinity}, if the sequence of its norm $(\vert z_{k}\vert)_{k\in\mathbb{N}}$ converges to infinity. In other words, it satisfies $\lim\limits_{k\to \infty}\vert z_{k}\vert=\infty$. Throughout this paper, we will assume that, unless otherwise noted, the sequence $(z_{k})_{k\in\mathbb{N}}$ is ordered as follows: $ \vert z_k\vert < \vert z_{k+1}\vert$ and, in the case  $\vert z_k\vert =\vert z_{k+1}\vert$, then $0\leq \arg(z_k)<\arg(z_{k+1})<2\pi$. 

We denote by $\widehat{\mathbb{C}}$ the extended complex plane. If we consider  a convergent sequence  to infinity $(z_{k})_{k\in\mathbb{N}}$, then the Weierstrass Theorem in \cite{Palka}*{p. 498} guarantees the existence of a meromorphic function $f:\mathbb{C}\to \widehat{\mathbb{C}}$ (called \emph{Weierstrass map}) whose simple zeroes are given by the points $z_{1},z_{2},\ldots$. Moreover, the map $f$ is uniquely determined (up multiplication) by a zero-free entire map, and it can be written as
\begin{equation}\label{eq:T_Weierstrass}
f(z)=h(z)z^{m}\prod_{k=1, z_{k} \neq 0}^{\infty}\left(1-\frac{z}{z_k}\right)E_k(z),
\end{equation}
where $h$ is a zero-free entire function ($m=0$, if $z_{k} \neq 0$ for every $k\in\mathbb{N}$; in the other case, $m=1$ for $z_{k}=0$), 
and $E_{k}(z)$ is a function of the form
\[
E_{k}(z)=\exp \left[\sum_{s=1}^{d(k)}\frac{1}{s}\left(\frac{z}{z_k}\right)^s \right],
\]
for a suitably large non-negative integer $d(k)$. 

Now, if we consider the holomorphic function $F:\mathbb{C}^2\to \mathbb{C}$ given by 
\[
F(z,w)=w^{n}-f(z),
\]
with $n\in\mathbb{N}$ such that $n\geq 2$, then we obtain the \emph{affine plane curve} 

\begin{equation}\label{equation-definition:Infinite_hyperelliptic_curve}
	S(f)=\left\{(z,w)\in\mathbb{C}^2: w^n=f(z)\right\}.
	\end{equation}
 
\begin{definition}[\cite{AGHQR}*{Subsection 6.3.1}]\label{definiton:infinite_hyperelliptic_curve}
Then the affine curve $S(f)$ is a Riemann surface, called {\bf infinite superelliptic curve}. If moreover, $n=2$, the affine plane curve $S(f)$ is known as {\bf infinite hyperelliptic curve}.
\end{definition}	

The following results describes the topology type of an infinite superelliptic curve.

\begin{theorem}[\cite{AGHQR}*{Theorem 6.2}]
	 The infinite superelliptic curve $S(f)$ is a connected Riemann surface homeomorphic to the Loch Ness monster.
\end{theorem}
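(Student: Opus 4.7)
The plan is to verify the two topological invariants that characterize the Loch Ness monster (by the Kerékjártó–Richards classification of non-compact surfaces): namely, that $S(f,m)$ is an orientable surface of infinite genus with exactly one end. Orientability is free since $S(f,m)$ is already known to be a Riemann surface. I would exploit the projection $\pi:S(f,m)\to\mathbb{C}$, $(z,w)\mapsto z$, which is a cyclic branched covering of degree $m$ whose branch locus is exactly $\textbf{Z}=\{z_k\}_{k\in\mathbb{N}}$, each branch point being totally ramified because the zeros of $f$ are simple.

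Next I would build a compact exhaustion of $\mathbb{C}$ by closed disks $D_n=\overline{B(0,r_n)}$, with $r_n\to\infty$ chosen so that no $z_k$ lies on $\partial D_n$ and so that the number $N_n$ of zeros of $f$ inside $D_n$ satisfies $\gcd(N_n,m)=1$; this is possible because the sequence $(z_k)$ converges to infinity, so we may always enlarge $r_n$ slightly to sweep in additional zeros and adjust $N_n\bmod m$. The preimage $\Sigma_n:=\pi^{-1}(D_n)$ is a compact surface with boundary, and the Riemann–Hurwitz formula applied to this branched covering gives
\[
\chi(\Sigma_n)=m\,\chi(D_n)-N_n(m-1)=m-N_n(m-1).
\]
Under the coprimality condition, the monodromy around $\partial D_n$ is the $N_n$-th power of an $m$-cycle, whose order in $\mathbb{Z}/m$ is $m$; hence $\pi^{-1}(\partial D_n)$ is connected, so $\Sigma_n$ has a single boundary component. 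Solving $\chi(\Sigma_n)=2-2g_n-1$ yields genus $g_n=(N_n-1)(m-1)/2$, which tends to infinity. This produces infinitely many independent handles in $S(f,m)$ and establishes infinite genus.

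For the end count, I would observe that the same monodromy computation applied to the annular region $\mathbb{C}\setminus D_n$ shows that its preimage $\pi^{-1}(\mathbb{C}\setminus D_n)=S(f,m)\setminus\Sigma_n$ is connected: it is an unbranched cyclic cover of the annulus whose monodromy generates $\mathbb{Z}/m$. Since the compact sets $\Sigma_n$ exhaust $S(f,m)$ and every complement is connected, the inverse system $\{\pi_0(S(f,m)\setminus\Sigma_n)\}$ is a trivial inverse limit, so the space of ends is a singleton. Combining this with infinite genus, and invoking the Kerékjártó–Richards classification, $S(f,m)$ is homeomorphic to the Loch Ness monster.

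The main obstacle I anticipate is the arithmetic step of guaranteeing $\gcd(N_n,m)=1$ along an exhausting sequence: this is what simultaneously forces the preimages of both $D_n$ and its complement to behave nicely (one boundary component of $\Sigma_n$, one end of the surface). Once this choice of exhaustion is justified via the unbounded distribution of the $z_k$, the rest is a routine bookkeeping with Riemann–Hurwitz and covering-space monodromy.
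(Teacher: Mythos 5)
This theorem is imported by the paper from \cite{AGHQR}*{Theorem 6.2} and is not proved in the manuscript, so there is no internal proof to compare against; I can only assess your argument on its own terms. Your overall strategy --- use the degree-$m$ cyclic branched covering $p_z$, a compact exhaustion, Riemann--Hurwitz for the genus, and connectivity of complements for the end count, then invoke Ker\'ekj\'art\'o--Richards --- is sound, and the genus computation $g_n=(N_n-1)(m-1)/2$ is correct once $\Sigma_n$ has a single boundary circle. But two steps need repair. First, your justification of the one-end claim is based on a false premise: $p_z^{-1}(\mathbb{C}\setminus D_n)$ is \emph{not} an unbranched cover of an annulus, since $\mathbb{C}\setminus D_n$ contains all but finitely many of the branch points $z_k$. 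The conclusion survives, and in fact more easily and with no coprimality hypothesis: the local monodromy around any single $z_k\in\mathbb{C}\setminus D_n$ is already an $m$-cycle, hence the monodromy of the restricted covering is transitive and the preimage of the complement is connected. You should argue it that way.

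Second, the arithmetic condition $\gcd(N_n,m)=1$ cannot always be arranged with round disks centered at the origin: nothing prevents the zeros from occurring, say, in antipodal pairs $\pm z_k$ of equal modulus, in which case every such disk contains an even number of zeros and for $m=2$ one never gets coprimality (and ``enlarging $r_n$ slightly'' cannot separate zeros of equal modulus). The fix is to abandon round disks: since only finitely many $z_k$ lie in any bounded set, you can take the exhausting compacta to be closed topological disks (a round disk with thin ``fingers'' attached reaching out to finitely many additional zeros while avoiding the rest), which lets you realize any prescribed residue of $N_n$ modulo $m$, e.g.\ $N_n\equiv 1\pmod m$; the monodromy around the boundary of such a Jordan domain is still multiplication by $N_n$ in $\mathbb{Z}/m$ because the covering is abelian. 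With these two corrections --- and the routine observations that $p_z$ is proper (so the $\Sigma_n$ really exhaust $S(f,m)$) and that $\Sigma_n$ is connected because the local monodromy at any interior branch point is transitive --- your proof goes through.
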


Recall that  the \emph{Loch Ness monster} is the unique, up to homeomorphism, infinite genus surface with exactly one end \cite{PSul}. Now, we shall define a tame translation structure on the infinite superelliptic curve $S(f)$. It is necessary to introduce the following remark.

 \begin{remark}[\cite{AGHQR}*{Remark 6.11}]\label{remark:properties_projection_map_first_coordinate}
 The projection map onto the first coordinate $p_{\textbf{z}}:S(f)\to \mathbb{C}$, which is given by $
 (z, w)\mapsto z$, satisfies the following properties:

\begin{itemize}
\item[\textbf{(1)}] It is a branched covering map, whose branch points are the terms in the sequence $(z_{k})_{k\in\mathbb{N}}$. 

\item[\textbf{(2)}] For each point $z\in \mathbb{C}\setminus \{ z_{k}: k\in \mathbb{N}\}$, the fiber $p_{\textbf{z}}^{-1}(z)$ consists of $n$ points.
\end{itemize}
\end{remark}

\begin{theorem}\label{theorem:tame_translate_strucuture_on_S(f)}
The infinite superelliptic curve $S(f)$ admits a tame translate structure. Moreover, each ordered pair $(z_{k},\textbf{0})$ of $ S(f)$ is a finite cone angle singularity of angle $2m\pi$, with $k\in\mathbb{N}$.
\end{theorem}

\begin{proof}
We remove the points of the sequences $(z_{k})_{k\in\mathbb{N}}$ and the points $(z_{k},\textbf{0})$, for each $k\in\mathbb{N}$, respectively, from the complex plane $\mathbb{C}$ and the infinite superelliptic curve $S(f)$, respectively. Thus, we obtain the subsurfaces 
\begin{align}
\mathbb{C}^{\star} &:=\mathbb{C}\setminus \{z_{k}:k\in\mathbb{N}\},\label{eq:flute_surface_C}\\
S(f)^{\star}&:=S(f)\setminus\{(z_{k},\textbf{0}):k\in\mathbb{N}\}.\label{eq:flut_sruface_S(f)}
\end{align}
As the complex plane $\mathbb{C}$ is considered as a translation surfaces with the global chart ${\rm Id}$. Then, the open subset $\mathbb{C}^{\star}\subset \mathbb{C}$ inherits this translation structure.

On the other hand, from the Remark \ref{remark:properties_projection_map_first_coordinate}, it follows that the restriction of the projection map onto the first coordinate 
\begin{equation}\label{eq:unramiried_covering_map_pi_z}
p_{\textbf{z}}:S(f)^{\star}\to \mathbb{C}^{\star},
\end{equation}
is an unramified covering map of finite degree $n$. Using this covering function $p_{\textbf{z}}$, we lift the translation structure of $\mathbb{C}^{\star}$ on $S(f)^{\star}$. Now, we denote by $\mu$ the flat Riemann metric on $S(f)^{\star}$, which is defined by its translation structure. Then, Remark \ref{remark:properties_projection_map_first_coordinate} guarantees that the metric completion of $S(f)^{\star}$ with respect to $\mu$ is $S(f)$, where each ordered pair $(z_{k},\textbf{0})$ of $S(f)$ is a finite cone point of angle $2m\pi$, for each $k\in\mathbb{N}$.
\end{proof}
  
\begin{remark}\label{remark:properties_of-C_star_S(f)_star}
The translation surfaces $\mathbb{C}^{\star}$ and $S(f)^{\star}$ defined in equations \eqref{eq:flute_surface_C} and \eqref{eq:flut_sruface_S(f)}, respectively, have the following topology type:
\begin{itemize}
\item[\textbf{(1)}] The surface $\mathbb{C}^{\star}$ is (up homeomorphism) a flute surface.

\item[\textbf{(2)}] The surface $S(f)^{\star}$ has as ends space ${\rm Ends}(S(f)^{\star})$ the ordinal number $\omega+1$, and the corresponding end to the accumulate point is the only one among them that is a non-planar end. 
\end{itemize}
\end{remark}

%%%%%%%%%%%%%%%%%%%%%%
%%Subsection Isomorphism
%%%%%%%%%%%%%%%%%%
\subsection{Isomorphism}  Let $S_{1}$ and $S_{2}$ be tame translation surfaces with translation atlas $\mathcal{A}=\{(U_{i}, \phi_{i})\}_{i\in\mathcal{I}}$ and $\mathcal{B}=\{(V_{j},\psi_{j})\}_{j\in\mathcal{J}}$, respectively. A homeomorphism $T:S_{1}\to S_{2}$ is called \emph{isomorphism from the structure $\mathcal{A}$ to the structure $\mathcal{B}$}, if it satisfies the following two properties:
\begin{itemize}
    \item[\textbf{(1)}] The map $T$ sends the singular points of $S_1$ onto the singular points of $S_2$, \emph{i.e.}, $T({\rm Sing}(S_1))={\rm Sing}(S_2)$.
    \item[\textbf{(2)}] The translation structure $\mathcal{B}=\{(T(U_{i}),\phi_{i} \circ T^{-1}: T(U_{i})\to \mathbb{C})\}_{i\in\mathcal{I}}$.
\end{itemize}
Two tame translation surfaces $S_{1}$ and $S_{2}$ are \emph{isomorphic}, if there is an isomorphism $T:S_{1}\to S_{2}$. 

Now, we will state a criterion for isomorphism between infinite hyperlliptic curves, consisting by fiber-preserving to descend. 

Let $(z_{k})_{k\in\mathbb{N}}$ and $(z_{k}')_{k\in\mathbb{N}}$ be two convergent sequence to infinity. Let $f$ and $g$ be the  Weierstrass maps as in the equation \eqref{eq:T_Weierstrass} having as simple zeros the points $z_{1},z_{2},\ldots$ and $z_{1}',z_{2}',\ldots$, respectively. Then we obtain the following infinite hyperelliptic curves
\begin{align}
    S(f)&=\left\{(z,w)\in\mathbb{C}^2:w^2=f(z)\right\},\\
    S(g)&=\left\{(z,w)\in\mathbb{C}^2:w^2=g(z)\right\}.
\end{align}

On the other hand, let us remember that: an isomorphism $T:S(f)\to S(g)$ is called \emph{fiber-preserving to descend}: if 
 the ordered pairs $(u_{1},v_{1})$ and $(u_{2},v_{2})$ are in the same fiber $p_{\textbf{z}}^{-1}(z)$, for any $z\in \mathbb{C}\setminus\{z_{k}:k\in\mathbb{N}\}$, then it satisfies the equality $p_{\textbf{z}}\circ T(u_{1},v_{1})=p_{\textbf{z}}\circ T(u_{2},v_{2})$, where $p_{\textbf{z}}$ is the projection map defined in Remark \ref{remark:properties_projection_map_first_coordinate}.

A necessary and sufficient condition for the infinite hyperelliptic curves $S(f)$ and $S(g)$ to be isomorphic is given by:

\begin{theorem}\label{theorem:isomorphism}
There exists a fiber-preserving to descend isomorphism $\tilde{T}: S(f)\to S(g)$, if and only if, there exists an isomorphism  $T:\mathbb{C}\to \mathbb{C}$ sending the zeros of $f$ onto the zeros of $g$.
\end{theorem}

\begin{proof}
We shall prove the sufficient condition. We assume that there exists a fiber-preserving descend isomorphism  $\tilde{T}:S(f)\to S(g)$. Since the translation structure on $S(f)$ is defined by the projection map $p_{\textbf{z}}$ (see Theorem  \ref{theorem:tame_translate_strucuture_on_S(f)}), then it is immediate that the map $T:=p_{\textbf{z}}\circ \tilde{T}\circ p_{\textbf{z}}^{-1}$ is a well defined isomorphism from $\mathbb{C}$ onto itself, which sends the points of the sequence $(z_{k})_{k\in\mathbb{N}}$ onto the points of the sequence $(z'_{k})_{k\in\mathbb{N}}$. This proves the sufficient condition.

We shall show the necessary condition. By hypothesis, there is an isomorphism $T:\mathbb{C}\to \mathbb{C}$ sending the points of the sequence $(z_{k})_{k\in\mathbb{N}}$ onto the the points of the sequence $(z'_{k})_{k\in\mathbb{N}}$. Then the map  $\tilde{T}: S(f)\to S(g)$ will be defined as a lifting of the map $T$. For each $k\in\mathbb{N}$, the image of the cone angle singularity $(z_{k},\textbf{0})$ under $\tilde{T}$ is given by 
 \[
 \tilde{T}(z_{k},\textbf{0}):=(T(z_{k}),\textbf{0}).
 \] 
 Now, we choose a basepoint $(z_{0},w_{0})$ in the subsurface $ S(f)^{\star}$, where $S(f)^{\star}:=S(f)\setminus {\rm Sing}(S(f))$, and fix an ordered pair $(u,v)$ in the fiber $p_{\textbf{z}}^{-1}(T(z_{0}))$. Thus, we define the image of $(z_{0},w_{0})$ under $\tilde{T}$ as
 \[
 \tilde{T}(z_{0}, w_{0})=(u,v).
 \]
 For each ordered pair  $(s,t)\in S(f)^{\star}$, we consider a path $\gamma$ in the flute surface $\mathbb{C}\setminus\{z_{k}:k\in\mathbb{N}\}$ joining the complex numbers $z_{0}$ and $s$. For the composition map $T\circ \gamma$ there is a lifting path $\tilde{\gamma}$ in $S(g)$ joining the ordered pairs $\tilde{T}(z_{0},w_{0})$ and $(\tilde{s},\tilde{t})$, for some  $(\tilde{s},\tilde{t})$ in the fiber $p_{\textbf{z}}^{-1}(T(s))$. Thus, the image of $(s,t)$ under $\tilde{T}$ is defined as
 \[
 \tilde{T}(s,t):=(\tilde{s},\tilde{t}).
 \]
 By construction the map $\tilde{T}:S(f)\to S(g)$ is a fiber-preserving to descend homeomorphism. Given that $T$ is an isomorphism of the complex plane $\mathbb{C}$ onto itself and the translation structures on $S(f)$ and $S(g)$, respectively, are defined from the projection map $p_{\textbf{z}}$, then the map $\tilde{T}$ is also an isomorphism.
 \end{proof}

We remark that the group of all the isomorphism from $\mathbb{C}$ to itself is the group of translations of the complex plane 
\[
{\rm Trans}(\mathbb{C})=\left\{ T_{b}:\mathbb{C}\to\mathbb{C}, \text{ given by } T_{b}(z)=z+b \mid  b\in\mathbb{C}\right\}.
\]
If we take $(z_{k})_{k\in\mathbb{N}}$ a convergent sequence to infinity, then the sequence $(T_{b}(z_{k}))_{k\in\mathbb{N}}=(z_{k}+b)_{k\in\mathbb{N}}$ is also convergent to infinity. Hence, we consider the Weierstrass maps $f$ and $f_{b}$ as in equation \eqref{eq:T_Weierstrass} having as simple zeros the points $z_{1},z_{2},\ldots$ and $z_{1}+b,z_{2}+b,\ldots$, respectively. Then, we obtain the infinite hyperelliptic curves
\begin{align}
    S(f)&=\left\{(z,w)\in\mathbb{C}^2:w^2=f(z)\right\},\\
    S(f_{b})&=\left\{(z,w)\in\mathbb{C}^2:w^2=f_{b}(z)\right\}.
\end{align}
Theorem \ref{theorem:isomorphism} ensures that the infinite hyperelliptic curves $S(f)$ and $S(f_{b})$ are isomorphic. 

On the other hand, let $\mathcal{C}_{\infty}$ denote the set of all convergent sequence to infinity. It can be thought as the set of all the infinite hyperelliptic curves having tame translation structure. Now, let $\ell_{\infty}$ denote the Banach space of all the bounded sequence of complex numbers $(z_k)_{k\in\mathbb{N}}$ equipped with the norm given by the formula
\[
\Vert (z_k)_{k\in\mathbb{N}}\Vert_{\infty}=\sup\limits_{k\in\mathbb{N}} \vert z_k\vert <\infty.
\]
Let $\mathcal{C}_0$ be the subspace of $\ell_{\infty}$ of all sequences of different complex numbers $(w_{k})_{k\in\mathbb{N}}$ such that each one of them converge to zero and, $w_{k}\neq \textbf{0}$, for each $k\in\mathbb{N}$. Using the involution map $w\mapsto \frac{1}{w}$, we identify each sequence $(w_{k})_{k\in\mathbb{N}}$ of $\mathcal{C}_{0}$ with the sequence $\left(\frac{1}{w_{k}}\right)_{k\in\mathbb{N}}$ of $\mathcal{C}_{\infty}$. We remark that the group of translations ${\rm Trans}(\mathbb{C})$ acts on the space $\mathcal{C}_{0}$ by
\begin{equation}\label{eq:space_of_convergent_infinity_sequence}
(T_{b}, (w_{k})_{k\in\mathbb{N}})\mapsto \left(\frac{w_{k}}{1+bw_{k}}\right)_{k\in \mathbb{N}}.
\end{equation}
As a consequence of action above and  Theorem \ref{theorem:isomorphism}, we obtain the following result describing the space of infinite hyperelliptic curves defined as tame translation surfaces, up isomorphism.

\begin{theorem}\label{t:modulli_space}
The space of all infinite hyperelliptic curves having tame translation surfaces, up to isomorphism, it can be identified to the quotient space $\mathcal{C}_{0}/ {\rm Trans}(\mathbb{C})$.
\end{theorem}

%%%%%%%%%%%%%%%%%%%%%%555
%%%%%%%%%%Section Saddle connection and holonomy
%%%%%%%%%%%%%%%
\subsection{Saddles connection and holonomy vectors} \label{section:saddle-holonomy}

A \emph{saddle connection} $\gamma$ of a tame translation surface $S$ is a geodesic interval joining two cone points and having non cone points in its interior. On the translation structure of $S$, we can find a chart $(U,\varphi)$ such that the open $U$ contains the saddle connection $\gamma$ without its endpoints, then the map $\varphi$ sends $\gamma$ on a straight line segment in $\mathbb{C}$. We orient such straight line segment in two possible ways  $[\theta], [-\theta] \in \mathbb{R}/2\pi \mathbb{Z}$, for any $\theta\in\mathbb{R}$. Thus, we obtain two oppositely pointing vectors $\{v,-v\}\subset\mathbb{C}$ associated to the saddle connection $\gamma$, which are in the direction $[\theta]$ and $[-\theta]$, respectively, and their norm are equal to the length of $\gamma$, with respect to the flat Riemannian metric $\mu$ on $S$. Each one of these vectors is called a \emph{holonomy vector} of $\gamma$. Two saddle connection of $S$ is called \emph{parallel}, if their respective holonomy vectors are parallel. The set of all the holonomy vectors of all saddle connections of the tame translation surface $S$ is denoted by ${\rm Hol}(S)$. We remark that the zero vector $\textbf{0}$ is not in ${\rm Hol}(S)$.

Let $(z_{k})_{k\in\mathbb{N}}$  converge to infinity and let $f$ be a Weierstrass map as in equation \eqref{eq:T_Weierstrass} having as simple zeros the points $z_{1},z_{2},\ldots$. Then we define the infinite superelliptic curve 
\begin{equation}
S(f)=\{(z,w)\in\mathbb{C}^2:f(z)=w^n\},
\end{equation}
with $n\geq 2$, and take the projection map $p_{\textbf{z}}:S(f)\to\mathbb{C}$ given by $(z,w)\mapsto z$ as in Remark \ref{remark:properties_projection_map_first_coordinate}. In addition, we take the subsurfaces (see Remark \ref{remark:properties_of-C_star_S(f)_star})
\begin{align}
\mathbb{C}^{\star} &:=\mathbb{C}\setminus \{z_{k}:k\in\mathbb{N}\},\\
S(f)^{\star}&:=S(f)\setminus\{(z_{k},\textbf{0}):k\in\mathbb{N}\}.
\end{align}
The following result gives sufficient and necessary conditions to draw saddles connection on $S(f)$.

\begin{theorem}\label{theorem:saddle_connection_S(f)}
The  geodesic interval $\gamma$ of $S(f)$  is a saddle connected, if and only if, the image $p_{\textbf{z}}(\gamma):=L$ is a straight line segment in the complex plane $\mathbb{C}$ satisfying the following properties:
\begin{itemize}
    \item[\textbf{(1)}] Its endpoints are two different complex numbers $z_{r}$ and $z_{l}$ of the sequence $(z_{k})_{k\in\mathbb{N}}$.
    \item[\textbf{(2)}] The straight line segment $L$ without its endpoints is contained in $\mathbb{C}^{\star}$.
\end{itemize}
\end{theorem}

\begin{proof}
We shall prove the sufficient condition. We consider a saddle connection $\gamma$ of $S(f)$. Since set of cone points ${\rm Sing}(S(f))$ are all the ordered pairs $(z_{k},\textbf{0})$, for each $k\in\mathbb{N}$, then the saddle connected $\gamma$ has endpoints $(z_{r},\textbf{0})$ and $(z_{l},\textbf{0})$, for some $r\neq l\in\mathbb{N}$. Now, given that the tame translation structure on $S(f)$ is defined by the projection map $p_{\textbf{z}}$, then the function $p_{\textbf{z}}$ must send the saddle connection $\gamma$ to a straight line segment $L$ in the complex plane $\mathbb{C}$ satisfying the following properties:
   \begin{itemize}
    \item[\textbf{(1)}] Its endpoints are two different complex numbers $z_{r}$ and $z_{l}$ of the sequence $(z_{k})_{k\in\mathbb{N}}$.
    \item[\textbf{(2)}] The straight line segment $L$ without its endpoints is contained in the flute surface $\mathbb{C}^{\star}$, because the saddle connection $\gamma$ does not have cone points in its interior.
    \end{itemize} 
  
\begin{remark}
As the restriction map $p_{\textbf{z}}:S(f)^{\star}\to\mathbb{C}^{\star}$ is an unbranched covering map, then the inverse image $p_{\textbf{z}}^{-1}(L\setminus\{z_{r},z_{l}\})$ is conformed by $n$ disjoint curves $\gamma_{1},\ldots,\gamma_{n}$ of $S(f)^{\star}$. One of these curves must be the saddle connection $\gamma$ without its endpoints. 
\end{remark}

 The necessary condition is immediate, because the translation structure on $S(f)$ is defined via the projection map $p_{\textbf{z}}:S(f)\to\mathbb{C}$. 
\end{proof}

\begin{definition}\label{definitio:saddle_connection_sequence} 
    A straight line segment $L$ in the complex plane $\mathbb{C}$ is to be said \textbf{saddle connection of  the flute surface $\mathbb{C}^{\star}$}, if it satisfies items \textbf{(1)} and \textbf{(2)} described in the previously Theorem \ref{theorem:saddle_connection_S(f)}.
\end{definition}

We remark that the set of all saddle connections of the sequence $(z_{k})_{k\in\mathbb{N}
}$ is non-empty, because if we consider any two different points $z_{r}$ and $z_{l}$ of the sequence $(z_{k})_{k\in\mathbb{N}}$, then we draw the straight line segment $L$ with endpoints $z_{r}$ and $z_{l}$. As the points of the sequence $(z_{k})_{k\in\mathbb{N}} $ are discrete, then $L$ contains at most a finitely many points of the sequence $(z_{k})_{k\in\mathbb{N}}$. These points are ordered by the norm. Then, it is enough to take two of these points being consecutive, and thus, we obtain a saddle connection of the sequence $(z_{k})_{k\in\mathbb{N}}$.  

Using the previous fact and Theorem \ref{theorem:saddle_connection_S(f)}, we can guarantee the existence of saddle connections and holonomy vector of the infinite superelliptic curve $S(f)$. Even more, we can describe all the holonomy vectors of the infinite superelliptic curve $S(f)$ by certain difference of the points in the sequence $(z_{k})_{k\in\mathbb{N}}$.

\begin{theorem}\label{theorem:holonomy_vectors_of_S(f)}
Let $v$ be a vector of $\mathbb{C}$. Then, $v$ is a holonomy vector of the infinite superelliptic curve $S(f)$, if and only if, there is a saddle connection $L$ of the flute surface $\mathbb{C}^{\star}$ having endpoints $z_{r}$ and $z_{l}$, such that either 
\begin{equation}\label{eq:holonomy_vectors_formula}
v=z_{r}-z_{l} \quad \text{or} \quad -v=z_{l}-z_{r}.
\end{equation}
Moreover, the set of the holonomy vectors ${\rm Hol}(S(f))$ of the infinite superelliptic curve $S(f)$ is countable. 
\end{theorem}

\begin{proof}
We shall prove the sufficient condition. We fix a holonomy vector $v$, then we consider a saddle connection $\gamma$ of $S(f)$, having $v$ associated as holonomy vector.  The saddle connection $\gamma$ connects the cone points $(z_{r},\textbf{0})$ and $(z_{l},\textbf{0})$, for any $r<l\in\mathbb{N}$, then projection map $p_{\textbf{z}}$ sends $\gamma$ to the  saddle connection $L$ of the sequence $(z_{k})_{k\in\mathbb{N}}$ having endpoints $z_{r}$ and $z_{l}$. Given that the tame translation structure on $S(f)$ is determined by the projection map $p_{\textbf{z}}$, then the holonomy vectors associated to $\gamma$ are given by the formula
\begin{equation}\label{eq2:holonomy_vectors_formula}
u=z_{r}-z_{l} \quad \text{and } \quad -u=z_{l}-z_{r}.
\end{equation}
This implies either $v=z_{r}-z_{l}$ or $v=z_{l}-z_{r}$.

Conversely, given that every saddle connection $L$ of the flute surface $\mathbb{C}^{\star}$ defines $n$ saddle connections in $S(f)$, via the projection map $p_{\textbf{z}}$, then the holonomy vectors associated to these $n$ saddle connections are the vectors given by the differences of the endpoints of $L$. Thus, we have obtained the following fact.

Now, we shall prove that the set of holonomy vectors ${\rm Hol}(S(f))$ is countable. Given that ${\rm Hol}(S(f))$ is a subset of all difference $H:=\{z_{r}-z_{l}: \text{ for each } r\neq l\in\mathbb{N}\}$, then it is enough to prove that $H$ is countable. We consider $P=\{p_{k}\}_{k\in\mathbb{N}}$ the subset of all prime numbers. It is easy to check that the map $\psi: H\to \mathbb{Z}$ given by the following product
     \[
\psi(z_{r}-z_{l})= \left\{
\begin{array}{ ll }
 p_{r}\cdot p_{l}; & \text{ if } r<l; \\
-p_{r}\cdot p_{l}; & \text{ if } r>l.\\
\end{array}
\right.
\]
it is a well-defined and injective.
\end{proof}

As a consequence of the previous Theorem, we have the following result.

\begin{corollary}\label{corollary:vectors_collinear}
   The points of the sequence $(z_{k})_{k\in\mathbb{N}}$ are collinear, if and only if, the holonomy vectors of $S(f)$ are collinear.  
\end{corollary}

\subsection{Trichotomy on the set ${\rm Hol}(S(f))$}\label{Subsubsection:trichotomy}

Since the set of holonomy vectors ${\rm Hol}(S(f))$ of an infinite superelliptic curve $S(f)$ is countable, then it is immediate that its respective closure 
\[
\overline{{\rm Hol}(S(f))},
\]
it is a separable space. Now, as  $\overline{{\rm Hol}(S(f))}$ is a closed subset of the complete metric space $\mathbb{C}$. From a classic result on analysis, we obtain that $\overline{{\rm Hol}(S(f))}$ is also complete. Hence, $\overline{{\rm Hol}(S(f))}$ is a separable completely metrizable topological space. In other words, $\overline{{\rm Hol}(S(f))}$  is a \emph{Polish space}.

It is well-known that the set of holonomy vectors ${\rm Hol}(S)$ of a compact translation surface $S$, it does not have limit points \cite{Voro}*{Proposition 3.1}. In other words, the derived set ${\rm Hol}(S)'$ of ${\rm Hol}(S)$ is empty. Nevertheless, in the case of an infinite superelliptic curve $S(f)$, it could establish the following \textbf{trichotomy} on the set ${\rm Hol}(S(f))$:
\begin{itemize}
    \item[\textbf{(1)}] The set of holonomy vectors ${\rm Hol}(S(f))$ of  $S(f)$ does not have limit points, it means, the derived set ${\rm Hol}(S(f))'=\emptyset$. Therefore, ${\rm Hol}(S(f))$ is discrete.
    \item[\textbf{(2)}] The derived set ${\rm Hol}(S(f))'\neq \emptyset$ and ${\rm Hol}(S(f))$ is a closed subset of $\mathbb{C}$.
    \item[\textbf{(3)}] The derived set ${\rm Hol}(S(f))'\neq \emptyset$ and ${\rm Hol}(S(f))$ is not a closed subset of $\mathbb{C}$.
\end{itemize}
Now, we shall exhibit for each one of the previous cases an infinite superelliptic curve. More precisely, for the case \textbf{(1)}, we shall build an infinite superelliptic curve whose  set of holonomy vectors is a subset of the discrete set $\mathbb{Z}\times\mathbb{Z}$. For the item \textbf{(2)}, we shall show an infinite superelliptic curve $S(f)$ such that its set ${\rm Hol}(S(f))$ is the closed subset 
\[
{\rm Hol}(S(f))=\overline{\left\{\pm\left(1+\frac{1}{k}\right):k\in\mathbb{N}\right\}}\subset \mathbb{C}.
\]
Finally, for the case \textbf{(3)}, we shall introduce an infinite superelliptic curve having to the rational numbers without the zero $\mathbb{Q}\setminus\{\textbf{0}\}$ as  holonomy vectors set and the derived set ${\rm Hol}(S(f))'$ being equal to the irrational numbers with the zero $\mathbb{I}\cup\{\textbf{0}\}$.

\subsubsection{Infinite superelliptic curve $S(f)$ such that its holonomy vectors set ${\rm Hol}(S(f))$ does not have limit points}\label{example:holonomy_Vectors_origami} 

We take the sequences $(z_{k})_{k\in\mathbb{N}}$ such that its points are all complex numbers of the discrete set $\mathbb{Z}\times\mathbb{Z}$. Then, we consider the Weierstrass map $f$ as in the equation \eqref{eq:T_Weierstrass}, which has as simple zeros the points $z_{1},z_{2},\ldots$. Then we consider the infinite superelliptic curve
\[
S(f)=\left\{(z,w)\in\mathbb{C}^2:f(z)=w^n\right\},
\]
with $n\geq 2$. Since the discrete set $\mathbb{Z}\times\mathbb{Z}$ is an additive group and, the appropriate differences of the points of the sequence $(z_{k})_{k\in\mathbb{N}}$ define the holonomy vectors of $S(f)$ (see Theorem \ref{theorem:holonomy_vectors_of_S(f)}), then the set of holonomy vectors ${\rm Hol}(S(f))$ is a discrete subset of $\mathbb{Z}\times\mathbb{Z}$. Thus, the derived set ${\rm Hol}(S(f))'$ of ${\rm Hol}(S(f))$ is empty.   

%%%%%%%%%%%%%%%%%%%%%%%%%%
%%%%Subsection Example
%%%%%%%%%%%%%%%%%%%%
\subsubsection{Infinite superelliptic curve $S(f)$ such that the derived set ${\rm Hol}(S(f))'\neq \emptyset$ and ${\rm Hol}(S(f))$ is a closed subset of $\mathbb{C}$}\label{example:isc_holonomy_omega}
We consider the sequence $(z_{k})_{k\in\mathbb{N}}$ of complex numbers defined inductively by
\begin{align}\label{eq:seq_hol_vect_compact_ordinal}
    z_{1}&:=\textbf{0},\notag \\
    z_{2}&:=1;\notag\\
    z_{k}&:=z_{k-1}+1+\frac{1}{k-1}, \text{ for each } k\geq 3.
\end{align}
\begin{remark}\label{remark:sequence_colinear_holonomy_vector}
The previously sequence $(z_{k})_{k\in\mathbb{N}}$ satisfies the following properties:
\begin{itemize}
    \item[\textbf{(1)}] Its points are in the real positive line join with the zero.

   \item[\textbf{(2)}] It is a strictly increasing sequence. It means, $z_{1}<\ldots < z_{k}<z_{k+1}<\ldots$. Moreover, $z_{2}-z_{1}=1$ and $z_{k}-z_{k-1}=1+\dfrac{1}{k-1}$ for each $k\geq 3$.

   \item[\textbf{(3)}]  The sequence $(z_{k})_{k\in\mathbb{N}}$ is unbounded, because for each $k\in\mathbb{N}$ it satisfies that $k \leq  z_{n}$ for each $n\geq k+1$.  It implies that the sequence $(z_{k})_{k\in\mathbb{N}}$ is convergent to infinity, it means that $\lim\limits_{k\to\infty} |z_{k}|=\infty$.
\end{itemize}
\end{remark}
 
Let $f$ be the Weierstrass map as in the equation (\ref{eq:T_Weierstrass}) having as simple zeros the points $z_{1},z_{2},\ldots$. Then we consider the infinite superelliptic curve
\[
S(f)=\left\{(z,w)\in\mathbb{C}^2:f(z)=w^n\right\},
\]
with $n\geq 2$. From the Theorem \ref{theorem:holonomy_vectors_of_S(f)} and the properties of the sequence $(z_{k})_{k\in\mathbb{N}}$ described in the Remark \ref{remark:sequence_colinear_holonomy_vector}) it follows that the set of holonomy vectors ${\rm Hol}(S(f))$ of $S(f)$ is the closed (compact) subspace 
\[
{\rm Hol}(S(f))=\left\{\pm 1, \pm\left(1+\frac{1}{k}\right):k\in\mathbb{N}\right\}=\overline{\left\{\pm\left(1+\frac{1}{k}\right):k\in\mathbb{N}\right\}}\subset\mathbb{C},
\]
where the derived set ${\rm Hol}(S(f))'=\{-1,1\}$.

Recall that a countable compact Hausdorff space X has \textbf{characteristic system} $(k; n)$ if its $k$-th
Cantor-Bendixon derivative, that we denote by $X^{k}$, is a finite set of $n$ points, for more details see \cite{kechris}*{p. 33}. For this case, the countable compact set ${\rm Hol}(S(f))$ has characteristic system $(1;2)$. From the theorem in \cite{MaSi}, it follows that ${\rm Hol}(S(f))$ is homeomorphic to the ordinal number $\omega\cdot 2+1$.

%%%%%%%%%%%%%%%%%%%%%%%%%%%%%
%%%%Subsection Example
%%%%%%%%%%%%%%%%%%%%%
\subsubsection{Infinite superelliptic curve $S(f)$ such that the derived set ${\rm Hol}(S(f))'\neq \emptyset$ and ${\rm Hol}(S(f))$ is not a closed subset of $\mathbb{C}$.}\label{example:holonomy_set_rationals}
 As the set of all positive rational numbers join with the zero $\mathbb{Q}^{+}_{\textbf{0}}$ is countably infinite, then there exists a bijective map $\psi:\mathbb{N}\to\mathbb{Q}^{+}_{\textbf{0}}$. We can assume without of generality that $\psi(1)=\textbf{0}$. In addition, $\mathbb{Q}^{+}_{\textbf{0}}$ is thought as a subset of $\mathbb{C}$. Hence, we obtain the divergent sequence $(\psi(n))_{n\in\mathbb{N}}$ of complex numbers. Now, we consider  the infinite series $(s_{k})_{k\in\mathbb{N}}$ generated by $(\psi(n))_{n\in\mathbb{N}}$ where the partial sum $s_{k}$ is given by
 \begin{equation}\label{eq:holonomy_set_rationals}
 s_{k}=\sum\limits_{l=1}^{k}\psi(l).
 \end{equation}
The previously series $(s_{k})_{k\in\mathbb{N}}$ satisfies the following properties:
\begin{itemize}
    \item[\textbf{(1)}] Each one of its terms is in the real positive line join with the zero $\mathbb{R}^{+}_{\textbf{0}}$.

    \item[\textbf{(2)}] The series $(s_{k})_{k\in\mathbb{N}}$ is a strictly increasing sequence, \emph{i.e.}, $s_{k}<s_{k+1}$, for each $k\in\mathbb{N}$. Moreover, it satisfies the relation $s_{k+1}-s_{k}=\psi(k+1)$.

    \item[\textbf{(3)}] The series $(s_{k})_{k\in\mathbb{N}}$ is convergent to infinity. If we fix a real number $M\geq 0$ and natural number $k\in\mathbb{N}$ such that $M\leq k$, as the rational numbers $\mathbb{Q}$ are dense in the real lineal $\mathbb{R}$, then we can find a positive rational number $\psi(N)$, for any $N\in\mathbb{N}$ such that $k<\psi(N)$. From the definition of the series $(s_{k})_{k\in\mathbb{N}}$ and the above property {\rm \textbf{(2)}}, we hold that $k<\psi(N)<s_{N}<s_{n}$, for each $n\geq N$. It shows that the series $(s_{k})_{k\in\mathbb{N}}$ is convergent to infinity \emph{i.e.}, $\lim\limits_{k\to \infty} |s_{k}|=\infty$.
\end{itemize}

Let $f$ be the Weierstrass map as in the equation \eqref{eq:T_Weierstrass} having as simple zeros the points $s_{1},s_{2},\ldots$. Then we consider the infinite superelliptic curve
\[
S(f)=\left\{(z,w)\in\mathbb{C}^2:f(z)=w^n\right\},
\]
with $n\geq 2$. From the Theorem \ref{theorem:holonomy_vectors_of_S(f)} and the properties {\rm \textbf{(1)}} and {\rm \textbf{(2)}} of the series $(s_{k})_{k\in\mathbb{N}}$ described above, it follows that the holonomy vectors set of the infinite superelliptic curve $S(f)$ is the subset 
\[
{\rm Hol}(S(f))=\left\{\pm \psi(k+1):k\in\mathbb{N}\right\}=\mathbb{Q}-\{\textbf{0}\}\subset\mathbb{C}.
\]   
We note that the closure $\overline{{\rm Hol}(S(f))}$ is the real line $\mathbb{R}$.

%%%%%%%%%%%%%%%%%%%%%
%%%%Section A charactereization for Veech group
%%%%%%%%%%%%%%%%%%%%%
\section{A characterization for Veech groups of infinite superelliptic curves}\label{section:Veech_group}

\subsection{The Veech group associated to a tame translation surface} Let $S_{1}$ and $S_{2}$ be two tame translation surface. A homeomorphism $T:S_{1}\to S_{2}$ is called \emph{affine diffeomorphism}, if it satisfies the following properties:
\begin{itemize}
    \item[\textbf{(1)}] It sends cone points to cone points.
    \item[\textbf{(2)}] The function $T$ is an real-affine map 
\[
z\mapsto A\cdot z + t, \quad \text{with some } A\in{\rm GL}(2,\mathbb{R}) \text{ and } t\in\mathbb{C},
\]
in the local coordinates of the translation atlas on $S_{1}$ and $S_{2}$, respectively.
\end{itemize}
We denote by ${\rm Aff}_{+}(S)$ the group of all the affine orientation preserving diffeomorphism from the tame translation surface $S$ to itself.

For the case of an infinite hyperelliptic curve $S(f)$, (see equation \eqref{equation-definition:Infinite_hyperelliptic_curve}), we can find some maps in ${\rm Aff}_{+}(S(f))$. Enough, it develops the same ideas about of the theory of covering space described in the proof of the Theorem \ref{theorem:isomorphism} and, as consequence it follows the following result.

\begin{theorem}\label{theorem:some_aff_ori_preser_diff_S(f)}
There exists a fiber-preserving to descend affine orientation preserving diffeomorphism $\tilde{T}: S(f)\to S(f)$ if and only if there exists an affine orientation preserving diffeomorphism $T:\mathbb{C}\to \mathbb{C}$ permuting the zeros of $f$.
\end{theorem}

If we fix a tame translation surface $S$ and choose an element $T\in {\rm Aff}_{+}(S)$, then using the translation structure on $S$, we hold that the differential $dT(p)$ of $T$ at any point $p\in S$ is a constant matrix $A$ belongs to ${\rm GL}_{+}(2,\mathbb{R})$. Then, we define the map
\begin{equation}\label{eq:homomorphism_differential_matrix}
D: {\rm Aff}_{+}(S)\to {\rm GL}_{+}(2,\mathbb{R}),
\end{equation}
where $D(T)$ is the differential matrix of $T$. Using the chain rule, it is easy to verify that $D$ is a group homomorphism. 

\begin{definition}[\cite{Vee}*{p. 556}]\label{def:Veech_group} 
The image of $D$, that we denote by $\Gamma(S)$, is called \textbf{the Veech group of $S$}. 
\end{definition}

Recall that ${\rm GL}_{+}(2,\mathbb{R})$ acts on the set of all the tame translation surfaces by
post composition on charts. More precisely, for every $A \in {\rm GL}_{+}(2,\mathbb{R})$, we obtain the $\mathbb{R}$-linear map $T_{A}:\mathbb{C}\to\mathbb{C}$ given by 
\begin{equation}\label{eq:R-linear_map_A}
z\mapsto A\cdot z.
\end{equation}
For a given translations surfaces $S$, the matrix $A$ defines the translations surfaces $S_{A} := g\cdot S$, from $S$ by compositing each one of its chart with $T_{A}$. So, the identity map 
\[
{\rm Id}: S\to S_{A},
\]
is an affine diffeomorphism with matrix differential $B$, and it induces the group isomorphism $\tilde{{\rm Id}}:{\rm Aff}_{+}(S)\to {\rm Aff}_{+}(S_{A})$ given by
\[
T\mapsto T_{A}\circ T \circ T_{A}^{-1}.
\]
As the differential of $\tilde{{\rm Id}}$ is $A$, then
\[
 \Gamma(S_{A})=A \Gamma(S) A^{-1}.
\]

Using the ideas described above, it is easy to check that, if the tame translation surfaces $S_{1}$ and $S_{2}$ are isomorphic, then their respective Veech group $\Gamma(S_{1})$ and $\Gamma(S_{2})$ also are isomorphic. In particular, if we take $(z_{k})_{k\in\mathbb{N}}$ a convergent sequence to infinity, then the sequence $(T_{A}(z_{k}))_{k\in\mathbb{N}}=(w_{k})_{k\in\mathbb{N}}$ is also convergent to infinity, where $T_{A}$ is as in \eqref{eq:R-linear_map_A}. Hence, we consider the Weierstrass maps $f$ and $f_{A}$ as in the equation \eqref{eq:T_Weierstrass} having as simple zeros the points $z_{1},z_{2},\ldots$ and $w_{1},w_{2},\ldots$, respectively. Then, we obtain the infinite superelliptic curves
\begin{align}
    S(f)&=\left\{(z,w)\in\mathbb{C}^2:w^n=f(z)\right\},\\
    S(f_{A})&=\left\{(z,w)\in\mathbb{C}^2:w^n=f_{A}(z)\right\}.
\end{align}
with $n\geq 2$. Using the ideas described in the proof of the Theorem \ref{theorem:isomorphism}, we can lift the map $T_{A}:\mathbb{C}\to\mathbb{C}$ to an affine diffeomorphism $\tilde{T}_{A}:S(f)\to S(f_{A})$, having matrix differential $A$. Then, the Veech groups of $S(f)$ and $S(f_{A})$ are related as follows: $\Gamma(S(f_{A}))=A \Gamma(S(f)) A^{-1}$.

On the other hand, given that each affine orientation preserving diffeomorphism $T$ of the tame translate surface $S$ sends each saddle connection to a saddle connection, then the Veech group $\Gamma(S)$ associated to $S$ acts on its set of holonomy vectors ${\rm Hol}(S)$ by matrix multiplication,
\[
(A,v)\mapsto Av.
\]

Recall that the Veech group of a compact translate surface is a Fuchsian group (see \cite{Vee}*{Proposition 2.7}, \cite{Voro}*{Proposition 3.3}). In the context of tame translation surfaces there is also a theorem describing all possible subgroups of ${\rm GL}_{+}(2,\mathbb{R})$, which can be Veech groups of a tame translate surface. 

\begin{theorem}[\cite{PSV}*{Theorem 1.1}]\label{T:PSV}
	Let $\Gamma(S)<{\rm GL}_{+}(2,\mathbb{R})$ be the Veech group of a tame translation surface $S$. Then one of the following holds:
	\begin{itemize}
		\item[\textbf{(1)}] The group $\Gamma(S)$ is countable and without contracting elements, it means,  $\Gamma(S)$ is disjoint from the set $\mathcal{U}:=\{A\in {\rm GL}_{+}(2,\mathbb{R}): \vert A \cdot v\vert < \vert v\vert, \text{  for all } v\in\mathbb{C}\setminus \{\textbf{0}\}\}$, or
		\item[\textbf{(2)}] The group $\Gamma(S)$ is conjugated to
		$
		P:=\left\{
		\begin{pmatrix}
		1 & t \\
		0 & s
		\end{pmatrix}\hspace{1mm}:\hspace{1mm} t\in\mathbb{R} \text{  and }
		\hspace{1mm} s\in\mathbb{R}^{+}
		\right\},\hspace{1mm}\text{or}
		$
		\item[\textbf{(3)}] The group $\Gamma(S)$ is conjugated to $P'<{\rm GL_{+}(2, \mathbb{R})}$, the subgroup generated by $P$ and $-{\rm Id}$, or
		\item[\textbf{(4)}] The group $\Gamma(S)$ is equal to ${\rm GL_{+}(2, \mathbb{R})}$.
	\end{itemize} 
\end{theorem}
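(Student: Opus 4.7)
The strategy is to analyse $\Gamma(S)$ through its action on the set $V_{sc}(S)\subset\mathbb{R}^{2}$ of holonomy vectors of saddle connections on $\widehat{S}$, and to push the dichotomy \emph{discrete versus non-discrete} as far as possible, adapting Veech's original argument for translation surfaces of finite area to the tame setting. First I would verify that $\Gamma(S)$ preserves $V_{sc}(S)$ setwise: any $f\in{\rm Aff}_{+}(S)$ sends cone points to cone points and straight segments to straight segments, so it maps saddle connections to saddle connections, and their holonomy vectors are transformed by the differential $\rho(f)$.

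Next comes the key dichotomy. If $V_{sc}(S)$ is discrete in $\mathbb{R}^{2}$, then no contracting $A\in\Gamma(S)$ can exist, because for any fixed nonzero $v\in V_{sc}(S)$ the sequence $\{A^{n}v\}$ would accumulate at $\textbf{0}$; moreover, since $V_{sc}(S)$ is countable, discrete and spans $\mathbb{R}^{2}$, each matrix in $\Gamma(S)$ is determined by where it sends any two linearly independent elements of $V_{sc}(S)$, yielding at most countably many possibilities. This gives case (1). Otherwise $V_{sc}(S)$ has an accumulation point, from which one extracts a sequence in $\Gamma(S)$ whose pairwise quotients converge to a non-identity limit inside $\overline{\Gamma(S)}$, forcing $\overline{\Gamma(S)}$ to be a closed subgroup of ${\rm GL}_{+}(2,\mathbb{R})$ of positive dimension.

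I would then invoke the Lie-algebra classification of closed subgroups of ${\rm GL}_{+}(2,\mathbb{R})$. The candidates for $\overline{\Gamma(S)}$ are one-parameter subgroups together with the usual two- and three-dimensional ones: the upper triangular group $P$, its $-{\rm Id}$-extension $P'$, and the full ${\rm GL}_{+}(2,\mathbb{R})$. The one-parameter possibilities are ruled out because the orbits they would produce in $V_{sc}(S)$ lie on a single curve and are incompatible with the higher-dimensional accumulation pattern coming from non-discreteness; analogous compatibility arguments dispose of the remaining intermediate closed subgroups. This leaves exactly cases (2), (3), and (4).

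The delicate step—and the main obstacle I foresee—is upgrading the conclusion from $\overline{\Gamma(S)}$ back to $\Gamma(S)$: every matrix in the closed limit subgroup must be realised as the differential of an honest element of ${\rm Aff}_{+}(S)$, not merely as a limit of such differentials. Here tameness is used crucially, since the two local models in Definition \ref{d:tame} provide simultaneously the rigidity needed to glue local affine maps into a globally defined one compatible with the singular set, and the flexibility needed to sweep out the continuous family of matrices produced by the limiting procedure. A subsidiary task is to rule out hybrid closures between (1) and (2)—for instance, products of a discrete Fuchsian factor with a one-parameter contracting subgroup—which is handled by analysing the normaliser of each one-parameter subgroup inside ${\rm GL}_{+}(2,\mathbb{R})$ and showing that it leaves no extra room beyond the four alternatives listed.
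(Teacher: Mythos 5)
You should first note that the paper does not prove this statement at all: it is quoted verbatim from \cite{PSV}*{Theorem 1.1}, so the comparison has to be made against the proof in that source. Measured against it, your proposal has a genuine structural gap: the dichotomy you build the whole argument on --- the holonomy set $V_{sc}(S)$ discrete versus non-discrete --- is not the dichotomy that produces the four alternatives. The uncountable cases (2)--(4) do \emph{not} arise from an accumulation point of $V_{sc}(S)$; they arise precisely when $V_{sc}(S)$ fails to contain two linearly independent vectors (it is empty, or all holonomy vectors are parallel). In those situations $V_{sc}(S)$ is typically perfectly discrete --- e.g.\ the surfaces in this very paper realizing $P$ and $P'$ have cone points on the real line and holonomy set inside $\mathbb{Z}$. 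Conversely, non-discreteness of $V_{sc}(S)$ gives you no elements of $\Gamma(S)$ whatsoever: the step ``an accumulation point of $V_{sc}(S)$ yields a sequence in $\Gamma(S)$ converging to a non-identity limit'' is unjustified and false, since a tame surface with non-discrete holonomy set can have trivial Veech group. Your case (1) argument also silently assumes $V_{sc}(S)$ spans $\mathbb{R}^{2}$; when it does not, the injection $A\mapsto(Av,Aw)$ is unavailable, and that is exactly where the hard work lies. Finally, excluding contracting elements via $A^{n}v\to\mathbf{0}$ requires knowing that holonomy vectors of a tame surface do not accumulate at the origin, which is itself a lemma needing proof (singularities being discrete in $\widehat{S}$ does not by itself bound saddle-connection lengths away from $0$).

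The actual proof in \cite{PSV} splits instead on the configuration of singularities and saddle connections: if there are two non-parallel saddle connections one gets countability and absence of contracting elements (close to your case (1) argument, done carefully); otherwise one classifies the surfaces themselves --- they turn out to be (cyclic branched covers of) the plane or cylinder with branch locus contained in a line, with the $\mathrm{GL}_{+}(2,\mathbb{R})$ case occurring only for cyclic covers of the plane branched over at most one point (the paper quotes this as \cite{PSV}*{Lemma 3.2}) --- and then exhibits, for each such surface, the affine automorphisms realizing all of $P$ or $P'$ explicitly. That last realization step, which you correctly flag as ``the delicate step,'' is the heart of the theorem in the degenerate cases, and ``tameness provides the rigidity and the flexibility needed'' is not an argument; without it you have not shown that the closure you produce is actually attained by $\Gamma(S)$, nor ruled out intermediate uncountable subgroups.
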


%%%%%%%%%%%%%%%%%%%%%%%%%%%%%%%%%%%%%%
%%%%%%%%%%%%Characterization
%%%%%%%%%%%%%%%%%%%%%%%%%%%%%%%%%%%%%
\subsection{Veech group of infinite superelliptic curves}

In this subsection, we shall give a characterization of Veech groups of infinite superelliptic curves.

Let $(z_{k})_{k\in\mathbb{N}}$ be a convergent sequence to infinite. Let $f$ denote the Weierstrass map as in the equation \eqref{eq:T_Weierstrass} having as simple zeros the points $z_{1},z_{2},\ldots$. Then consider the infinite superelliptic curve 
\begin{equation}\label{eq:superelliptic_curve_Veech_group}
S(f)=\{(z,w)\in\mathbb{C}^2:f(z)=w^n\}.
\end{equation}
with $n\geq 2$. To the complex plane $\mathbb{C}$ and to the infinite superelliptic curve $S(f)$, we remove the points of the sequences $(z_{k})_{k\in\mathbb{N}}$ and the cone points ${\rm Sing}(S(f))$, respectively. Thus, we obtain the subsurfaces 
\begin{align}
\mathbb{C}^{\star} &:=\mathbb{C}\setminus \{z_{k}:k\in\mathbb{N}\},\label{eq1:flute_surface}\\
S(f)^{\star}&:=S(f)\setminus{\rm Sing}(S(f)).\label{eq2:flute_surface_S(f)}
\end{align}
Recall that the translation structures on $\mathbb{C}$ and $S(f)$ respectively, are restricted to the their open subsets $\mathbb{C}^{\star}$ and $S(f)^{\star}$, respectively. Thus,  $\mathbb{C}^{\star}$ and $S(f)^{\star}$ we can thought as  translation surfaces. Moreover, the restriction of the projection map onto the first coordinate 
\begin{equation}\label{eq2:restriction_unramiried_covering_map_pi_z}
p_{\textbf{z}}:S(f)^{\star}\to \mathbb{C}^{\star},
\end{equation}
is an unramified covering map of finite degree $n$ (see Remark \ref{remark:properties_projection_map_first_coordinate}). Recall that the \emph{flute surfaces} are the unique infinite-type surfaces, up to homeomorphism, of genus zero and with space of ends homeomorphic to the ordinal number $\omega + 1$ \cite{Bas93}.

\begin{remark}\label{remark:veech_group_of_the_flute_surface}
We obtain the following properties for the previous translation surfaces.

\begin{itemize}
\item[\textbf{(1)}] The subgroup ${\rm Aff}_{+}(\mathbb{C}^{\star})$ of ${\rm Aff}_{+}(\mathbb{C})$ consisting of all the affine diffeomorphism, permuting the points of the sequence $(z_{k})_{k\in\mathbb{N}}$.

\item[\textbf{(2)}]  The Veech group $\Gamma(\mathbb{C}^{\star})$ of $\mathbb{C}^{\star}$ is the image $D({\rm Aff}_{+}(\mathbb{C}^{\star}))$; where $D$ is the group homeomorphism defined in equation \eqref{eq:homomorphism_differential_matrix}. 
\item[\textbf{(3)}] Given that the tame translation structure on $S(f)$ is defined via the projection map  $p_{\textbf{z}}:S(f)\to \mathbb{C}$, see Theorem \ref{theorem:tame_translate_strucuture_on_S(f)}, and the translation subsurface $S(f)^{\star}$ is obtained from $S(f)$ removing its cone points, then it follows that the groups ${\rm Aff}_{+}(S(f))$ and ${\rm Aff}_{+}(S(f)^{\star})$ are the same. It implies that the Veech groups of $S(f)$ and $S(f)^{\star}$ are also the same. In other words, $\Gamma(S(f))=\Gamma(S(f)^{\star})$.
\end{itemize}
\end{remark}

We shall prove that the Veech groups of the flute surface $\mathbb{C}^{\star}$ and the infinite superelliptic curve $S(f)$ are the same. It means, $\Gamma(S(f))=\Gamma(\mathbb{C}^{\star})$. The following construction is necessary to state and prove our result.

On the other hand, we fix a topological unramified universal covering map of the flute surface $\mathbb{C}^{\star}$,
\begin{equation}\label{eq:unramified_universal_covering}
\tilde{p}:\tilde{S}\to \mathbb{C}^{\star}.
\end{equation}
Then the group of deck transformation ${\rm Deck}(\tilde{S}\,|\,\mathbb{C}^{\star})$ of the covering map $\tilde{p}$ is isomorphic to the fundamental group $\pi_{1}(\mathbb{C}^{\star})$ of the flute  surface $\mathbb{C}^{\star}$ (see \cite{Forster}*{p. 34, 5.6 Theorem}).
In this case, the fundamental group is free on an infinite number of generators (see \cite{ARMORA}*{Theorem 1}), it will be denote by 
\[
\pi_{1}(\mathbb{C}^{\star}):=F_{\infty}.
\]
By Theorem 5.1 in \cite{Massey}*{p. 128}, it follows that there exists an unramified covering map
\begin{equation}\label{eq:unramified_covering_map_q}
q:\tilde{S}\to S(f)^{\star}
\end{equation}
such that we have the following commutative diagram
\begin{equation}\label{diagram:universal_coverin_p}
\begin{tikzcd}
\tilde{S} \arrow{d}[swap]{\tilde{p}} \arrow{r}{q} & S(f)^{\star} \arrow{ld}{p_{\textbf{z}}} \\
\mathbb{C}^{\star}          &
\end{tikzcd}
\end{equation}
it means, $\tilde{p}=p_{\textbf z}\circ q$. 

\begin{remark}\label{remark:group_H}
We note that the group of deck transformation 
\[
H:={\rm Deck}(\tilde{S}\,|\,S(f)^{\star}),
\]
of the covering map $q$ is a subgroup of $F_{\infty}$ such that the quotient space $\tilde{S}/H$ is homeomorphic to $S(f)^{\star}$. Since the unramified covering map $p_{\textbf{z}}$ defined in equation \eqref{eq2:restriction_unramiried_covering_map_pi_z} has degree $n$, then the index of the subgroup $H$ is $n$ \cite{Miranda}*{p. 86}. 
\end{remark}

Using the (topological) unramified universal covering map $\tilde{p}:\tilde{S}\to\mathbb{C}^{\star}$, described in equation \eqref{eq:unramified_universal_covering}, we lift the translation structure of the flute surface $\mathbb{C}^{\star}$ on the universal covering $\tilde{S}$. Recall the notion of developing map associated to the translation surface $\tilde{S}$ (refer to \cite{Thurston1997}*{\S 3.5}, \cite{Goldman}*{\S 5.3.2}). For a fix chart $(U_{i},\varphi_{i})$ of $\tilde{S}$, the \emph{translation developing map} $\textbf{dev}:\tilde{S}\to \mathbb{C}$ is defined as follows:
\[
\varphi_{i}=\textbf{dev}|_{U_{i}}, \quad \text{and} \quad \varphi_{j}=t\circ \textbf{dev}|_{U_{j}} \text{ for a translation} \, t:=t(U_{j},\varphi_{j}),
\]
for any other chart $(U_{j},\varphi_{j})$ of $\tilde{S}$. Given that $\tilde{S}$ is simply connected, then the translation developing map $\textbf{dev}$ is also a covering map. Moreover, if we consider any affine orientation preserving diffeomorphism $\tilde{T}$ of $\tilde{S}$, then there exists a unique affine orientation preserving diffeomorphism $T$ of $\mathbb{C}$ such that the following diagram
\begin{equation}\label{eq:digram_developing_map}
\begin{tikzcd}
\tilde{S}\arrow{d}[swap]{\textbf{dev}} \arrow{r}{\tilde{T}}          &\tilde{S}\arrow{d}{\textbf{dev}}\\
\mathbb{C} \arrow{r}[swap]{T}          &\mathbb{C}
\end{tikzcd}
\end{equation}
 is commutative, \emph{i.e.}, $\textbf{dev}\circ \tilde{T} =T\circ \textbf{dev}$. 
 
\begin{definition} 
We define the group homomorphism
 \begin{equation}\label{eq:map_aff_+}
 {\rm aff}_{+}:{\rm Aff}_{+}(\tilde{S})\to {\rm Aff}_{+}(\mathbb{C}),
 \end{equation}
where ${\rm aff}_{+}$ sends $\tilde{T}\in {\rm Aff}_{+}(\tilde{S})$ to the unique map ${\rm aff}_{+}(\tilde{T}):=T\in {\rm Aff}_{+}(\mathbb{C})$ such that the diagram \eqref{eq:digram_developing_map} is commutative. 
\end{definition}

As each element of the the fundamental group $F_{\infty}$ of the flute surface $\mathbb{C}^{\star}$, is locally the identity, then $F_{\infty}$ is a subgroup of ${\rm Aff}_{+}(\tilde{S})$. Thus, we consider the restriction map
\begin{equation}\label{eq:holonomy_map}
{\rm hol}:={\rm aff}_{+}|_{F_{\infty}}:F_{\infty}\to {\rm Aff}_{+}(\mathbb{C}),
\end{equation}
which is called the \emph{holonomy of} $\tilde{S}$. Now, we consider the group homomorphism
\[
D:{\rm Aff}_{+}(\mathbb{C})\to {\rm GL}_{+}(2,\mathbb{R}),
\]
such that $D(T)$ is the differential matrix of $T\in {\rm Aff}_{+}(\mathbb{C})$. Then so is the composite
\begin{equation}\label{eq:Veech_group_of_universal_covering}
D \circ {\rm aff}_{+}:{\rm Aff}_{+}(\tilde{S})\to {\rm GL}_{+}(2,\mathbb{R}),
\end{equation}
a group homeomorphism, such that its image is the Veech group $\Gamma(\tilde{S})$ of the translation universal covering $\tilde{S}$ (see Definition \ref{def:Veech_group}).

\begin{theorem}\label{theorem:related_Vecch_group_1}
    Let $S(f)$ be an infinite superelliptic curve as in equation \eqref{eq:superelliptic_curve_Veech_group} and let $\tilde{S}$ be the universal covering of the flute surface $\mathbb{C}^{\star}$, endowed with the translation structure defined via the universal covering map $\tilde{p}$ (see equation \eqref{eq:unramified_universal_covering}). Then we have:
    \begin{itemize}
        \item[\textbf{(1)}] $\Gamma(S(f))$ is a subgroup of $\Gamma(U)$.
        \item[\textbf{(2)}] $\Gamma(\mathbb{C}^{\ast})=\Gamma(U)$.
    \end{itemize}
\end{theorem}

\begin{proof}
    \textbf{(1)} We consider the affine orientation preserving diffeomorphism $T\in {\rm Aff}_{+}(S(f))$, then its restriction to $S(f)^{\star}$ is an element in ${\rm Aff}_{+}(S(f)^{\star})$, whose differential matrices are the same (see item \textbf{(4)} in Remark \ref{remark:veech_group_of_the_flute_surface}). We can abuse notation and denote this restriction function by T. Then there exists a lift $\tilde{T}:\tilde{S}\to\tilde{S}$ of $T$ via the unramified covering map $q$, which is described in equation \eqref{eq:unramified_covering_map_q}. Given that translation structure on $U$ is defined from the map $p$, then $\tilde{T}$ is also an affine orientation preserving diffeomorphism of $U$, such that its respective differential matrix coincides with the differential matrix of $\tilde{T}$. In other words, $D(\tilde{T})=D(T)$. It implies that $D(T)\in \Gamma(U)$. Thus, $\Gamma(S(f))$ is a subgroup of $\Gamma(U)$. 

    \textbf{(2)} To prove the equality, the following construction of a developing map is necessary. Recall that identity map ${\rm Id}:\mathbb{C}\to\mathbb{C}$ is an universal covering map, then we take its restriction ${\rm Id}|_{\mathbb{C}^{\star}}$ to $\mathbb{C}^{\star}$. Given that $\tilde{p}=q\circ p_{\textbf{z}}$ is the universal covering map from $\tilde{S}$ to the flute surface $\mathbb{C}^{\star}$, (see equation \eqref{diagram:universal_coverin_p}), then there exist an unramified covering map $h:\tilde{S}\to \mathbb{C}^{\star}$ such that we have the following commutative diagram
\begin{equation}\label{diagram:universal_coverin_h}
\begin{tikzcd}
\tilde{S} \arrow{d}[swap]{\tilde{p}} \arrow{r}{h} & \mathbb{C}^{\star} \arrow{ld}{{\rm Id}|_{\mathbb{C}^{\star}}} \\
\mathbb{C}^{\star}          &
\end{tikzcd}
\end{equation}
it means, ${\rm Id}|_{\mathbb{C}^{\star}}\circ h=\tilde{p}=p_{\textbf z}\circ q$. Since the translation structure on $\tilde{S}$ is obtained by lifting the translation structure on $\mathbb{C}^{\star}$ via the universal covering map $\tilde{p}$, then the unramified covering map $h$ is therefore locally a chart of $\tilde{S}$. From the properties of developing map, it follows that $h$ is also a developing map having as image the open subset $\mathbb{C}^{\star}\subset\mathbb{C}$.

Now, we consider a matrix $A$ in the Veech group $\Gamma(\tilde{S})$, then there exists an element $\tilde{T}\in {\rm Aff}_{+}(\tilde{S})$, such that $A=D\circ {\rm aff}_{+}(\tilde{T})$, (see equation \eqref{eq:Veech_group_of_universal_covering}). By properties of the developing map and the definition of the group homeomorphism ${\rm aff}_{+}$, it follows that  there exists a unique affine orientation preserving diffeomorphism $T$ of $\mathbb{C}^{\ast}$ such that its differential matrix is $A$, and the following diagram
\begin{equation}\label{eq2:digram_developing_map}
\begin{tikzcd}
\tilde{S}\arrow{d}[swap]{h} \arrow{r}{\tilde{T}}          &\tilde{S}\arrow{d}{h}\\
\mathbb{C}^{\star} \arrow{r}[swap]{T}          &\mathbb{C}^{\star}
\end{tikzcd}
\end{equation}
 is commutative, \emph{i.e.}, $h\circ \tilde{T} =T\circ h$. It proves that $A$ is in $\Gamma(\mathbb{C}^{\star})$. Hence, we conclude that $\Gamma(\tilde{S})\subset \Gamma(\mathbb{C}^{\star})$.

 Conversely, we take an element $A\in \Gamma(\mathbb{C}^{\star})$, then there exists an affine diffeomorphism $T\in {\rm Aff}_{+}(\mathbb{C}^{\star})$ having the matrix $A$ as its differential matrix. From the universal covering theory, it follows that the map $T$ can be lifted to some affine orientation preserving diffeomorphism $\tilde{T}$ of $\tilde{S}$ whose differential matrix is $A$. Thus, we obtain that $A\in\Gamma(\mathbb{C}^{\star})$. Hence, we hold the containment $\Gamma(\mathbb{C}^{\star})\subset \Gamma(\tilde{S})$. 
\end{proof}

\begin{theorem}\label{theorem:related_Vecch_group_2}
The Veech group $\Gamma(\mathbb{C}^{\star})$ is a subgroup of $\Gamma(S(f))$.
\end{theorem}

\begin{proof}
It is enough to demonstrate that $\Gamma(\mathbb{C}^{\star})$ is contained in $\Gamma(S(f))$. Let $A$ be a matrix in $\Gamma(\mathbb{C}^{\star})$. From the item \textbf{(1)} in Remark \ref{remark:veech_group_of_the_flute_surface}, it follows that there exists $T$ an affine orientation preserving diffeomorphism of the complex plane $\mathbb{C}$ permuting the zeros $z_{1},z_{2},\ldots$ of the Weierstrass map $f$, (see equation \eqref{eq:superelliptic_curve_Veech_group}), and having differential matrix $D(T)=A$. We shall build a map  $\tilde{T}: S(f)\to S(f)$, which will be defined as a lifting of the map $T$. Then, we will prove that the differential matrix of $\tilde{T}$ is $A$. 

For each $k\in\mathbb{N}$, the image of the cone angle singularity $(z_{k},\textbf{0})$ under $\tilde{T}$ is given by 
 \[
 \tilde{T}(z_{k},\textbf{0}):=(T(z_{k}),\textbf{0}).
 \] 
 Now, we fix a basepoint $(z_{0},w_{0})$ in the subspace $ S(f)^{\star}\subset S(f)$ and select an ordered pair $(u,v)$ from the $n$ ordered pairs conforming the fibre $p_{\textbf{z}}^{-1}(T(z_{0}))$. Then the image of $(z_{0},w_{0})$ under $\tilde{T}$ is defined as
 \[
 \tilde{T}(z_{0}, w_{0}):=(u,v).
 \]
 Given any ordered pair  $(s,t)\in S(f)^{\star}$, then we consider a path $\gamma$ in $\mathbb{C}^{\star}$ having endpoints the complex numbers $z_{0}$ and $s$. Using the unramified covering map $p_{\textbf{z}}:S(f)^{\star}\to \mathbb{C}^{\star}$, the composition map $T\circ \gamma$ can be lifted to a path $\tilde{\gamma}$ of $S(f)^{\star}$ having as endpoints the ordered pairs $\tilde{T}(z_{0},w_{0})$ and $(\tilde{s},\tilde{t})$, such that $(\tilde{s},\tilde{t})$ is in the fiber $p_{\textbf{z}}^{-1}(T(s))$. Thus, the image of the ordered pair $(s,t)$ under $\tilde{T}$ is defined as follows:
 \[
 \tilde{T}(s,t):=(\tilde{s},\tilde{t}).
 \]
 By construction the map $\tilde{T}:S(f)\to S(f)$ is a (homeomorphism) lifting of $T$. Given that the translation structure on $S(f)$ is defined via the projection map $p_{\textbf{z}}$, then locally the map $\tilde{T}$ is a restriction of $T$, it means $T=p_{\tilde{z}}\circ\tilde{T}\circ p_{\textbf{z}}^{-1}$. It implies, that the differential matrix of $\tilde{T}$ is the matrix $A$. Hence, we conclude that $A\in \Gamma(S(f))$.
 \end{proof}

\begin{corollary}\label{corollary:Veech_equality}
 The Veech groups $\Gamma(\mathbb{C}^{\star})$ and $\Gamma(S(f))$ are the same. In other words,  $\Gamma(\mathbb{C}^{\star})=\Gamma(S(f))$.
\end{corollary}

Now, we shall realize the Fuchsian group ${\rm SL}(2,\mathbb{Z})$ as Veech group of an infinite superelliptic curve.

%%%%%%%%%%%%%%%%%%%%%%5
%%%%%%Subsection Example
%%%%%%%%%%%%%%%%%%%%%
\subsubsection{${\rm SL}(2,\mathbb{Z})$ as Veech group of an infinite superelliptic curve.}
Given that every affine orientation preserving diffeomorphism $T:\mathbb{C}\to \mathbb{C}$ that preserves $\mathbb{Z}\times\mathbb{Z}$, is given by
\[
z\mapsto A\cdot z+t, \quad \text{with some } A\in {\rm SL}(2,\mathbb{Z}) \text{ and } t\in \mathbb{Z}\times\mathbb{Z},
\]
then it follows that the Veech group $\Gamma(S(f))$ of the infinite superelliptic curve $S(f)$ described in \S \ref{example:holonomy_Vectors_origami} is ${\rm SL}(2,\mathbb{Z})$.

%%%%%%%%%%%%%%%%%%%%%%%%%%%%
%%%%%%%%%Subsection Uncountable Veech
%%%%%%%%%%%%%%%%%%
\subsection{Uncountable Veech group of infinite hyperelliptic curves}

Let us remark that the only tame translation surfaces having ${\rm GL}_{+}(2,\mathbb{R})$ as their Veech group are the cyclic branched coverings of the complex plane $\mathbb{C}$, (refer to\cite{PSV}*{Lemma 3.2}). Correlating the previous statement and Theorem \ref{T:PSV}, it establishes that the uncountable subgroups of ${\rm GL}_{+}(2,\mathbb{R})$ that can appear as Veech groups of an infinite superelliptic curve are those that are conjugated to $P$ or conjugated to $P'$. We shall introduce a result, which gives sufficient and necessary conditions to guarantee that the Veech group $\Gamma(S(f))$ of an infinite superelliptic curve $S(f)$ is an uncountable proper subgroup $\Gamma(S(f))$ of ${\rm GL}_{+}(2,\mathbb{R})$. Moreover, we shall realize the groups $P$ and $P'$ as Veech group of an infinite superelliptic curves.

We take a sequence of complex numbers $(z_{k})_{k\in\mathbb{N}}$ such that $\lim\limits_{k\to \infty}|z_{k}|=\infty$. Let $f$ denote the Weierstrass map as in equation \eqref{eq:T_Weierstrass} having as simple zeros the points $z_{1},z_{2},\ldots$. Then we consider the infinite superelliptic curve 
\begin{equation}\label{eq:superelliptic_curve_uncountable_Veech_group}
S(f)=\{(z,w)\in\mathbb{C}^2:f(z)=w^n\}.
\end{equation}
with $n\geq 2$. We can suppose without of generality that $z_{1}=\textbf{0}$. We denote by $\Gamma(S(f))$ the Veech group of $S(f)$. The following result gives sufficient and
necessary conditions to guarantee that the Veech group of an infinite superelliptic curve $S (f)$ is an uncountable subgroup of ${\rm GL}_{+}(2, \mathbb{R})$.

\begin{theorem}\label{t:characterization_uncountable}
	The Veech group $\Gamma(S(f))$ of $S(f)$ is an uncountable subgroup of ${\rm GL}_{+}(2,\mathbb{R})$, if and only if, all the points in the sequence $(z_{k})_{k\in\mathbb{N}}$ are collinear. 
\end{theorem}

\begin{proof}
	We shall prove the necessary condition. It is enough to prove that the Veech  $\Gamma(S(f))$ has a conjugated subgroup to $P$. Then we consider the straight line $L$ passing through the origin $\textbf{0}$ such that all the points in the sequence $(z_{k})_{k\in \mathbb{N}}$ lie on that straight line $L$. Now, we consider the rotation complex map $R_{\theta}:\mathbb{C}\to\mathbb{C}$, for any $\theta\in\mathbb{R}$, which sends the straight line $L$ onto the real axis. We remark that for each matrix $A\in P$, the $\mathbb{R}$-affine map $T_{A}:\mathbb{C}\to \mathbb{C}$ defined by 
 \[
 z\mapsto A\cdot z, 
 \]
 fixes the real axis. Then, the composition map $R_{\theta}^{-1}\circ T_{A}\circ R_{\theta}$ is an affine orientation preserving diffeomorphism of $\mathbb{C}$, which fixes the straight line $L$. In particular, each point in the sequence $(z_{k})_{k\in\mathbb{N}}$ is fixed by the map $R_{\theta}^{-1}\circ T_{A}\circ R_{\theta}$. This implies that the position map $R_{\theta}^{-1}\circ T_{A}\circ R_{\theta}$ is an element in ${\rm Aff}_{+}(\mathbb{C}^{\star})$, where $\mathbb{C}^{\star}$ is the flute surface obtained from the complex plane $\mathbb{C}$ by removing all the points in the sequence $(z_{k})_{k\in\mathbb{N}}$. Thus, the differential matrix $D(R_{\theta}^{-1}\circ T_{A}\circ R_{\theta})=D(R_{\theta}^{-1})\cdot A \cdot D(R_{\theta})$ is in the Veech group $\Gamma(\mathbb{C}^{\star})$. Let us recall the equality between Veech groups $\Gamma(\mathbb{C}^{\star})=\Gamma(S(f))$, (see Corollary \ref{corollary:Veech_equality}). Since the matrix $A$ in  $P$ is considered arbitrarily, then the Veech group $\Gamma(S(f))$ contains a conjugated subgroup to $P$ by a rotation matrix. As the group $P$ has uncountable cardinality, then the Veech group $\Gamma(S(f))$ must be uncountable.

 \begin{remark}
  Applying Theorem \ref{theorem:holonomy_vectors_of_S(f)} and Corollary \ref{corollary:vectors_collinear} we obtain the following equivalent statements:
\begin{enumerate}
 \item[\textbf{(1)}] All points in the sequence $(z_{k})_{k\in\mathbb{N}}$ are collinear. 

\item[\textbf{(2)}] The saddles connection of $S(f)$ are parallel.

 \item[\textbf{(3)}] The holonomy vectors of $S(f)$ are parallel. 
\end{enumerate}   
 \end{remark}
 Now, we will show the sufficient condition by proceeding by contradiction on item \textbf{(2)} described in the previous Remark. We assume that the Veech group $\Gamma(S(f))$ of $S(f)$ is an uncountable proper subgroup of ${\rm GL}_{+}(2,\mathbb{R})$, and there are two linearly independent holonomy vectors $v,u \in {\rm Hol}(S(f))$ over $\mathbb{R}$. Given that the vectors $v$ and $u$ form a basis for the real-vector space $\mathbb{\mathbb{C}}$, then the map $\rho$ from $\Gamma(S(f))$ to the product ${\rm Hol}(S(f))\times {\rm Hol}(S(f))$ given by
\[
A \mapsto (A\cdot v, A\cdot u),
\]
it is an injective function. This implies that the product ${\rm Hol}(S(f))\times {\rm Hol}(S(f))$ must be an uncountable set. Clearly, this fact is a contradiction because such product is countable (see Theorem \ref{theorem:holonomy_vectors_of_S(f)}). Hence, we conclude that the holonomy vectors of $S(f)$ are parallel.
\end{proof}

The following corollary is immediate from the Theorem above.

\begin{corollary}
	Let $T_{-{\rm Id}}:\mathbb{C}\to\mathbb{C}$ be the real-affine map defined by the matrix ${\rm -Id}$. If all the points in the sequence $(z_{k})_{k\in\mathbb{N}}$ are invariant under the composition function $R^{-1}_{\theta}\circ T_{-{\rm Id}}\circ R_{\theta}$, then the Veech group $\Gamma(S(f))$ is conjugated to the group $P'$.
\end{corollary}

Now, we shall realize the uncountable groups $P$ and $P'$ as Veech group of infinite superelliptic curves.

%%%%%%%%%%%%%%%%%%%%%%%%%%%%%%%%%%%
%%%%%%%%%Subsection Example
%%%%%%%%%%%%%%%%%%%%
\subsubsection{The groups $P$ and $P'$ as Veech group of infinite superelliptic curves.}\label{sub:realization_uncountable_Veech_group}
From the previous results, the Veech group of the following infinite superelliptic curves can be easily computed.

\begin{itemize}
\item[\textbf{(1)}] The group $P$ appears as Veech group of the infinite superelliptic curves described in \S\ref{example:isc_holonomy_omega} and \S\ref{example:holonomy_set_rationals}.

\item[\textbf{(2)}] The group $P'$ can be realized as Veech group of the infinite superelliptic curve $S(f)$, for which the Weierstrass map $f$ as in the equation \eqref{eq:T_Weierstrass} having as simple zeros all the integers numbers $\mathbb{Z}$.
\end{itemize}

%%%%%%%%%%%%%%%%%%%%%%%%%%%%5
%%%%%%%%%Subsection Countable Veech group
%%%%%%%%%%%%%%%%%%%%%%%%%%%%%
\subsection{Countable Veech groups of infinite hyperelliptic curves} 

Given $(z_{k})_{k\in\mathbb{N}}$ an convergent sequence to infinity. Let $f$ be the  Weierstrass maps as in equation \eqref{eq:T_Weierstrass} having as simple zeros the points $z_{1},z_{2},\ldots$. From this holomorphic function $f$, we obtain the infinite superelliptic curve
\begin{equation}\label{eq:inf_super_curv_countable_Veech}
    S(f)=\left\{(z,w)\in\mathbb{C}^{2}:w^{n}=f(z)\right\},
\end{equation}
with $n\geq 2$. The following result provides a precise description of the countable Veech group associated to the infinite superelliptic curve $S(f)$ whose set of holonomy vectors satisfies either \textbf{(1)} or \textbf{(2)} in the \textbf{trichotomy} on the set ${\rm Hol}(S(f))$, see
 \S\ref{Subsubsection:trichotomy}. Recall that $S^{1}:=\{z\in\mathbb{C}:|z|=1\}$  is the unit circle in the complex plane, considered as a topological group under the complex multiplication and the usual topology.

\begin{theorem}\label{theorem:countable_Veech_group_of_infinite_superelliptic_curve}
 Let $\Gamma(S(f))$ be the Veech group of the infinite superelliptic curve $S(f)$ as in equation \eqref{eq:inf_super_curv_countable_Veech}, such that it is an non-trivial countable subgroup  of ${\rm GL}_{+}(2,\mathbb{R})$ without contracting matrices. Then we have:

\begin{itemize}
\item[\textbf{(1)}] If the set of holonomy vectors ${\rm Hol}(S(f))$ does not have limits point, then the Veech group $\Gamma(S(f))$ is discrete.

\item[\textbf{(2)}] If the set of holonomy vectors ${\rm Hol}(S(f))$ is closed such that the derived set ${\rm Hol}(S(f))'\neq \emptyset$, then the Veech group $\Gamma(S(f))$ is homeomorphic to some finite subgroup of $S^{1}$.
\end{itemize}
\end{theorem}

\begin{proof}
\textbf{(1)} We consider the sequences $(A_{k})_{k\in\mathbb{N}}$ of elements in $\Gamma(S)$ convergent to the identity matrix, \emph{i.e.}, 
 $\lim\limits_{k\to\infty}A_{k}={\rm Id}$. Now, we take a holonomy vector $v\in {\rm Hol}(S(f))$. Since the Veech group $\Gamma(S(f))$ acts on ${\rm Hol}(S(f))$ by matrix multiplication, then the sequence  $(A_{k}\cdot v)_{k\in\mathbb{N}}$ of holonomy vectors must be converge to ${\rm Id}\cdot v$. In other words,  $\lim\limits_{K\to \infty} A_{k}\cdot v={\rm Id}\cdot v=v$. Given that the holomony vector set ${\rm Hol}(S(f))$ does not have limits points, and its points are not collinear, then there exists a natural number $M\in\mathbb{N}$ such that for each $m\geq M $ it satisfies $A_{m}={\rm Id}$.    

\textbf{(2)} Given that the countable subspace ${\rm Hol}(S(f))$ is closed and the zero $\textbf{0}\notin {\rm Hol}(S(f))$, then each one of the following sets has minimum
\begin{align}
H_{1}:=\{|z|:z\in {\rm Hol}(S(f))\}\subset\mathbb{R}^{+},\\
H_{2}:=\{|z|:\in {\rm Hol}(S(f))\}\subset \mathbb{R}^{+},
\end{align}
which are different to zero and, they are denoted by $r_{1}$ and $r_{2}$, respectively. Since the norm complex function $G:\mathbb{C}\to \mathbb{R}$, defined by $z\mapsto |z|$, is continuous, then the non-empty sets
\begin{align}
\tilde{H}_{1}:=\{z\in{\rm Hol}(S(f)):|z|=r_{1}\},\\
\tilde{H}_{2}:=\{z\in {\rm Hol}(S(f)): |z|=r_{2}\},
\end{align}
are non-empty closed subsets of ${\rm Hol}(S(f))$. We remark that the points of $\tilde{H}_{i}$ lie on the circle $C_{r_{1}}(\textbf{0})$ with center at $\textbf{0}$ and radius $r_{i}$, with $i\in\{1,2\}$. Recall that each matrix $A$ in the Veech Group $\Gamma(S(f))$ is not contracting, it means $|A\cdot w|>|w|$ or $|A\cdot w|=|w|$, for each $w\in\mathbb{C}\setminus\{\textbf{0}\}$. Given that the Veech group $\Gamma(S(f))$ acts on the holonomy vector set ${\rm Hol}(S(f))$ by matrix multiplication, then $T_{A}$ the action restricted to the matrix $A$ must permute the points of $\tilde{H}_{i}$, for each $i\in\{1,2\}$. This implies that $|A\cdot w|=|w|$ for each $w\in\mathbb{C}\setminus\{\textbf{0}\}$. Therefore, $A$ is an orthogonal matrix having positive determinant. In other words, $A$ is a rotation. Then, we can identify $\Gamma(S(f))$ with a some subgroup of the $S_{1}$. It is well-known that every subgroup of $S^{1}$
 is either dense in $S^{1}$
 or finite. Nevertheless, the Veech group $\Gamma(S(f))$ must be finite, because if $\Gamma(S(f))$ is homeomorphic to some dense subgroup of $S^{1}$, then image of $\tilde{H}_{1}$ under $T_{A}$ the restricted action must be dense in the circle $C_{r_{1}}(\textbf{0})$. Given that the derive set ${\rm Hol}(S(f))'\neq \emptyset$ and ${\rm Hol}(S(f))$ is closed, then we have the equality  $\tilde{H}_{1}=C_{r_{1}}(\textbf{0})$. Since $\tilde{H}_{1}$ is contained in ${\rm Hol}(S(f))'$, it implies that ${\rm Hol}(S(f))$ has the cardinality of the continuum, but ${\rm Hol}(S(f))$ is countable.
\end{proof}

Now, we shall realize a finite subgroup $G< {\rm GL}_{+}(2,\mathbb{R})$, without contracting elements, as Veech group of an infinite superelliptic curve.

%%%%%%%%%%%%%%
%%%%Subsection Example
%%%%%%%%%%%%%%%%%%%%%%
\subsubsection{Finite subgroup $G<{\rm GL}_{+}(2,\mathbb{R})$, without contracting elements, as Veech group of an infinite superelliptic curve.}\label{example:finite_group_as_Veech_group}
    
Let $A_{1},\ldots,A_{l}$ be the matrices of the group $G$, for any $l\in\mathbb{N}$ and let $T_{A_{j}}:\mathbb{C}\to\mathbb{C}$ be the $\mathbb{R}$-linear map defined by 
\[
z\mapsto A_{j}\cdot z,
\]
for each $j\in\{1,\ldots,l\}$. We draw on the complex plane $\mathbb{C}$ the straight lines $\ell_{1}$ and $\ell_{i}$, through the point $\textbf{0}$ and parallel to the vectors $1$ and $i$, respectively. Then, we consider the following sets of collinear points 
\begin{align}
C_{1}&=\left\{1\cdot m: m\in\mathbb{N}_{0}\right\}, \\
C_{i}&=\left\{i\cdot m: m\in\mathbb{N}_{0}\right\},
\end{align}
which are belonged to $\ell_{\textbf{1}}$ and $\ell_{\textbf{i}}$, respectively. Now, we define the following sets
\begin{align}
\mathcal{C}&:=\left\{T_{A_{j}}(1\cdot m),\, T_{A_{j}}(i\cdot m): m\in \mathbb{N}_{0} \text{ and } j\in\{1,\ldots, l\}\right\},\\
\mathcal{L}&:=\left\{T_{A_{j}}(\ell_{1}):=\ell_{1}^{j},\, T_{A_{j}}(\ell_{i})=\ell_{i}^{j}: j\in\{1,\ldots, l\}\right\}.
\end{align}
\begin{remark}\label{remark:construction_Veech_finite}
The sets $\mathcal{C}$ and $\mathcal{L}$ have the following properties:
    \begin{itemize}
        \item[\textbf{(1)}] The map $T_{A_{j}}$ permutes all (the straight line, respectively) the points of the set ($\mathcal{L}$, respectively) $\mathcal{C}$, for each $j\in\{1,\ldots,l\}$.
        
        \item[\textbf{(2)}] All the straight lines of $\mathcal{L}$ pass through the point $\textbf{0}$. 
        
         \item[\textbf{(3)}]  As $G$ is finite group without contracting elements, then $\mathcal{C}$ is a countable discrete subset of $\mathbb{C}$. 
   
          \item[\textbf{(4)}] All the points of the set $\mathcal{C}$ define a convergent sequence to infinity $(z_{k})_{k\in\mathbb{N}}$. We can suppose without of generality that $z_{1}=\textbf{0}$. In addition, we obtain the flute surface $\mathbb{C}^{\star}:=\mathbb{C}\setminus\{z_{k}:k\in\mathbb{N}\}$.
    \end{itemize}
\end{remark}
Let $f$ be the Weierstrass map as in equation (\ref{eq:T_Weierstrass}) having  as simple zeros the points $z_{1},z_{2},\ldots$. Thus, we define the infinite superelliptic curve
\[
S(f)=\left\{(z,w)\in\mathbb{C}^2:f(z)=w^n\right\},
\]
with $n\geq 2$.

On the other hand, Corollary \ref{corollary:Veech_equality} guarantees the equality $\Gamma(\mathbb{C}^{\star})=\Gamma(S(f))$. We shall prove that $G=\Gamma(\mathbb{C}^{\star})$. As the $\mathbb{R}$-linear transformation $T_{A_{j}}$ permutes the points of the sequence $(z_{k})_{k\in\mathbb{N}}$, for each $j\in\{1,\ldots,l\}$ (see item \textbf{(1)} in previous Remark \ref{remark:construction_Veech_finite}), then $G$ is a subgroup of $\Gamma(\mathbb{C}^{\star})$. In other words, $G<\Gamma(S(f))$. 

Conversely, we consider an affine orientation preserving diffeomorphism $T:\mathbb{C}\to \mathbb{C}$ permuting the points of the sequence $(z_{k})_{k\in\mathbb{N}}$, such that it is different to the identity. We denote by $A$ the differential matrix associated to $T$, and we will prove that $A\in G$. By construction of the sequence $(z_{k})_{k\in\mathbb{N}}$, we have that there are exactly $2l$ saddle connections of the flute surface $\mathbb{C}^{\star}$,
\[
L_{1}^{1},\, L_{i}^{1},\, \ldots,\, L_{1}^{m},\,L_{i}^{m},
\]
such that one of theirs endpoints is $z_{1}=\textbf{0}$. In addition, the saddles connection $L_{1}^{j}$ and  $L_{i}^{j}$ are contained in the straight lines $\ell_{1}^{j}$ and $\ell_{i}^{j}$, respectively, for each $j\in\{1,\ldots,l\}$. From the Theorem \ref{theorem:holonomy_vectors_of_S(f)}, we obtain that the holonomy vectors of $S(f)$ from these saddle connections are 
\begin{equation}\label{eq:some_holonomy_vectors}
\pm T_{A_{1}}(1),\, \pm T_{A_{1}}(i),\, \ldots, \, \pm T_{A_{l}}(1),\, \pm T_{A_{l}}(i).
\end{equation}

Now, given that the Veech group $\Gamma(S(f))=\Gamma(\mathbb{C}^{\star})$ acts on the set of holonomy vectors ${\rm Hol}(S(f))$ of $S(f)$, then the matrix $A$ must leave invariant the vectors described in equation \eqref{eq:some_holonomy_vectors}. Moreover, the $\mathbb{R}$-linear transformation $T_{A}:\mathbb{C}\to\mathbb{C}$ given by 
\[
z\mapsto A\cdot z
\] 
must permute the straight lines of the set $\mathbb{L}$. The only possibility is $A=A_{j}$, for any $j\in\{1,\ldots,l\}$. Therefore, $G<\Gamma(S(f))$.

\begin{question}[Open]
  Which are the subgroups of ${\rm GL}_{+}(2,\mathbb{R})$ appearing as Veech groups of an infinite superelliptic curve satisfying \textbf{(3)} in \S\ref{Subsubsection:trichotomy} the \textbf{trichotomy} on the set ${\rm Hol}(S(f))$?  
\end{question}

\subsection*{Acknowledgements}

We express our gratitude to UNIVERSIDAD NACIONAL DE COLOMBIA, SEDE MANIZALES. He has dedicated this work to his beautiful family: Marbella and Emilio, in appreciation of their love and support. We thank Israel Morales Jim\'enez and Alexander Arredondo for providing helpful comments and conversations
on this work.

%%%%%%%%%%%%%%%%%%%%%%%%%%%%%%%%%%%%%%%%%%%%%%%%%%%%%%%%%%%%%%%%%%%%%%%%%%%%%%%%%%%%%%%%%%%%%%%%%%%%%%%%%%%%%%%%%%%%%%%%%%%%%%%%%%%%%%%%%%%%%%%%%%%%%%%%%%%%%%%%%%

\end{document}